\DeclareMathOperator{\pr}{pr}
\newtheorem{thm}{Theorem}[section]
\newtheorem{df}[thm]{Definition}
\newtheorem{prop}[thm]{Proposition}
\newtheorem{lem}[thm]{Lemma}
\newtheorem{cor}[thm]{Corollary}
\newtheorem{rem}[thm]{Remark}
\newtheorem{ass}[thm]{Assumption}
\begin{document}
\title[]{Index of rigidity of differential equations and Euler characteristic of 
their spectral curves}
%\date{}
\if0
\begin{volumeauthor}
	\name{Kazuki Hiroe}
	\address{Department of Mathematics and Informatics, Chiba University\\
1-33, Yayoi-cho, Inage-ku, Chiba-shi, Chiba, 263-8522 JAPAN}
	\email{kazuki@math.s.chiba-u.ac.jp}
\end{volumeauthor}
\fi
\author{Kazuki Hiroe\\
	Chiba University
}
\email{kazuki@math.s.chiba-u.ac.jp}
\address{
Department of Mathematics and Informatics, Chiba University\\
1-33, Yayoi-cho, Inage-ku, Chiba-shi, Chiba, 263-8522 JAPAN}
\thanks{The author is supported by JSPS GGrant-in-Aid for Scientific Research (C)
Grant Number 20K03648.}

\keywords{Irregular singularity, Spectral curve,   
Index of rigidity, Euler characteristic  
Komatsu-Malgrange irregularity, Milnor number}
\subjclass{	14H70, 14H20, 34M35}

\begin{abstract}
	We show a coincidence of index of rigidity of differential equations with irregular singularities on a compact Riemann surface and
	Euler characteristic of the associated spectral curves which are recently called irregular spectral curves. Also we present a comparison of  local invariants, so called Milnor formula which
	links the Komatsu-Malgrange irregularity of differential equations and Milnor number of the spectral curves.
\end{abstract}
\maketitle

\section*{Introduction}
Higgs bundles with irregular singularities have been studied by many 
researchers from several points of view, mirror symmetry, geometric 
Langlands program, nonablelian Hodge correspondence and so on, see
\cite{BB}, \cite{KS}, \cite{Moc}, \cite{Sz}, \cite{Wit}.
At the same time, 
studies of relations between spectral curves and differential equations
recently increase in importance, see \cite{DHS}, \cite{DM} in which differential equations obtained as 
a quantization of spectral curves are discussed.

This paper presents a numerical comparison between cohomology groups of
a differential equation with irregular singularities on a Riemann
surface and those of  associated spectral curve.
A main result in this paper is the following.
We consider a differential equation $$dw=Aw$$ where
$A$ is a square matrix of size $n$ whose entries are meromorphic 1-forms on
a compact Riemann surface $X$ of genus $g$.
In particular this differential equation is allowed to have
several regular/irregular singular points $a_{1},a_{1},\ldots,a_{k}$ on $X$.
This differential equation defines a lambda connection $\nabla_{\lambda}$ on 
the trivial bundle $\mathcal{O}_{X}^{\oplus n}$ by
\[
	\nabla_{\lambda}=\lambda d-A	
\]
for $\lambda\in \mathbb{C}$, and we 
obtain a (possibly irregular) singular Higgs bundle 
$\nabla_{0}=A$ as a classical limit of the differential equation.
We can define a divisor on the cotangent bundle $T^{*}X$ as
the zero locus of the characteristic polynomial of the Higgs bundle $\nabla_{0}$,
\[
	\mathrm{det}(yI_{n}-A),
\]
and this is called spectral curve $C_{A}$.
Singular points appear as poles of $A$ and the zero locus of the
characteristic polynomial will pass through the line at infinity
$y=\infty$. Thus it is natural to consider the spectral curve
as a divisor on a compactified cotangent bundle $\overline{T^{*}X}$.
Let us assume one of the following conditions is satisfied at
each singular points $a_{1},a_{2},\ldots,a_{k}$.
\begin{enumerate}
\item The Hukuhara-Turrittin-Levelt normal form of the germ $A_{a_{i}}$
of $A$ at $a_{i}$ is multiplicity free (see Definition \ref{mfhtl}).
\item The germ $A_{a_{i}}$ is regular semisimple over $\mathbb{C}[\![z_{a_{i}}]\!]$ (see Definition \ref{rssdef}).
\end{enumerate}
Then we can show the following coincidence of the index of rigidity of
the differential equation and the Euler characteristic of the spectral
curve.
\begin{thm}[Theorem \ref{maintheorem}, Corollary \ref{cohomology}]
	Let $\nabla_{A}$ be the algebraic connection defined by the
	differential equation $dw=Aw$.
	Suppose that $C_{A}$ is irreducible. Moreover suppose that
	$C_{A}$ is smooth on $T^{*}X$.
	Then the index of rigidity $\mathrm{rig\,}(\nabla_{A})$ of $\nabla_{A}$ and
	the Euler characteristic $\chi(\widetilde{C_{A}})$ of the
	normalization $\widetilde{C_{A}}$ of $C_{A}$ coincide with
	each other, i.e.,
	\[
	\mathrm{rig}(\nabla_{A})=\chi(\widetilde{C_{A}}).
	\]
	Moreover assume that $\nabla_{A}$ is irreducible. Then we have the numerical coincidences of cohomology groups,
	\[
		h^{i}_{\mathrm{dR}}(X,j_{!*}(\mathcal{E}\mathrm{nd}\nabla_{A}))=h^{i}(\widetilde{C_{A}},\mathbb{C}),\quad
		i=0,1,2.
	\]
	Here $h^{i}(*):=\mathrm{dim}_{\mathbb{C}}H^{i}(*).$
\end{thm}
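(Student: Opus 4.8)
The plan is to establish the Euler-characteristic identity first (which needs only that $C_A$ is irreducible), and then to promote it to the degreewise coincidence of cohomology under the extra irreducibility of $\nabla_A$. The whole argument is a comparison of two Euler characteristics, each written as a universal ``pole-free'' term minus a sum of purely local corrections at the singular points $a_1,\dots,a_k$, the matching of the local corrections being the Milnor formula.

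\textbf{Step 1 (the rigidity side as an Euler characteristic).} By definition the index of rigidity is the Euler characteristic of the de Rham cohomology of the intermediate extension of the endomorphism connection,
\[
	\mathrm{rig}(\nabla_A)=\chi_{\mathrm{dR}}(X,j_{!*}(\mathcal{E}\mathrm{nd}\,\nabla_A))=\sum_{i=0}^2(-1)^i h^i_{\mathrm{dR}}(X,j_{!*}(\mathcal{E}\mathrm{nd}\,\nabla_A)).
\]
I would evaluate this by the Euler--Poincar\'e (Deligne--Malgrange) formula on $U=X\setminus\{a_1,\dots,a_k\}$, corrected by the local terms distinguishing $j_{!*}$ from $j_!$ and $Rj_*$, obtaining
\[
	\mathrm{rig}(\nabla_A)=(2-2g)\,n^2-\sum_{i=1}^k\ell_{a_i},\qquad
	\ell_{a_i}=\mathrm{irr}_{a_i}(\mathcal{E}\mathrm{nd}\,\nabla_A)+n^2-\dim Z_{a_i},
\]
where $\mathrm{irr}_{a_i}$ is the Komatsu--Malgrange irregularity and $Z_{a_i}$ is the space of formal horizontal endomorphisms at $a_i$ (the centralizer of the local formal/Stokes structure). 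The baseline $(2-2g)n^2$ is exactly the value for a connection without poles, so all the content is concentrated in the local quantities $\ell_{a_i}$.

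\textbf{Step 2 (the spectral-curve side).} The projection $\pi\colon C_A\to X$ is finite of degree $n$ and extends to $\widetilde\pi\colon\widetilde{C_A}\to X$. Since $C_A$ is smooth on $T^*X$, all singularities of $C_A$ lie over the line at infinity, hence above the poles $a_1,\dots,a_k$. Applying Riemann--Hurwitz to $\widetilde\pi$ together with the normalization correction $2\delta_{a_i}$ at the singular points above each $a_i$, and subtracting the universal pole-free contribution (for which the discriminant of $\det(yI_n-A)$ is a section of $K_X^{n(n-1)}$ of degree $n(n-1)(2g-2)$, giving the value $(2-2g)n^2$), I would arrive at the matched form
\[
	\chi(\widetilde{C_A})=(2-2g)\,n^2-\sum_{i=1}^k m_{a_i},
\]
where $m_{a_i}$ is a local defect at $a_i$ assembling the excess branching and the delta-invariant there, computable from the Newton polygon of $\det(yI_n-A)$ at $a_i$, i.e. from the pole data of $A$.

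\textbf{Step 3 (the Milnor formula: local comparison).} This is the heart and the main obstacle: I must show $\ell_{a_i}=m_{a_i}$ at each $a_i$. Under hypothesis (1) the Hukuhara--Turrittin--Levelt normal form is multiplicity free, so the $n$ formal exponential factors are distinct; each contributes one branch of $C_A$ through infinity, and both $\mathrm{irr}_{a_i}(\mathcal{E}\mathrm{nd}\,\nabla_A)$ (a sum over pairs of factors of the orders of their differences) and the delta-invariants of the corresponding plane-curve singularity are expressed through the very same valuations, while $\dim Z_{a_i}$ records which factors coincide. Under hypothesis (2) the germ $A_{a_i}$ is regular semisimple over $\mathbb{C}[\![z_{a_i}]\!]$, which again pins down the local branches and lets me read off $\dim Z_{a_i}$ and the Milnor number directly from the characteristic polynomial. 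In either case I would verify $\ell_{a_i}=m_{a_i}$ by a direct valuation count, matching the Komatsu--Malgrange irregularity with the Milnor number; this is precisely where the two hypotheses are indispensable, since they guarantee that the eigenvalue differences have well-defined orders and that $C_A$ acquires no unexpected tangencies or extra local components.

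\textbf{Step 4 (from Euler characteristics to individual cohomology).} Summing $\ell_{a_i}=m_{a_i}$ over $i$ turns Steps 1 and 2 into $\mathrm{rig}(\nabla_A)=\chi(\widetilde{C_A})$, proving the first assertion; here only irreducibility of $C_A$ enters, to ensure $\widetilde{C_A}$ is a connected smooth projective curve. For the refinement, assume $\nabla_A$ irreducible. Then $\mathcal{E}\mathrm{nd}\,\nabla_A$ has a one-dimensional space of global horizontal sections, so $h^0_{\mathrm{dR}}(X,j_{!*}(\mathcal{E}\mathrm{nd}\,\nabla_A))=1$, and by self-duality of $\mathcal{E}\mathrm{nd}\,\nabla_A$ with Poincar\'e duality for the intermediate extension, $h^2_{\mathrm{dR}}=1$. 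On the geometric side connectedness of $\widetilde{C_A}$ gives $h^0(\widetilde{C_A},\mathbb{C})=h^2(\widetilde{C_A},\mathbb{C})=1$. Thus the outer cohomology groups agree degreewise, and the established equality of Euler characteristics forces $h^1_{\mathrm{dR}}(X,j_{!*}(\mathcal{E}\mathrm{nd}\,\nabla_A))=h^1(\widetilde{C_A},\mathbb{C})$, completing the proof.
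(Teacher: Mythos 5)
Your global architecture is exactly the paper's: the index of rigidity is expanded by the Euler--Poincar\'e formula of Deligne--Gabber--Katz into $(2-2g)n^{2}$ minus local terms $\delta(\mathrm{End}_{\mathbb{C}(\!\{z_{a}\}\!)}(M_{A_{a}}))$; the Euler characteristic of $\widetilde{C_{A}}$ is written as $2-2g_{a}(C_{A})$ plus $2\sum_{a}\delta(C_{A_{a}})$ (the paper computes $g_{a}(C_{A})$ by the genus formula on the ruled surface rather than your Riemann--Hurwitz/discriminant count, but these give the same baseline $(2-2g)n^{2}$ after absorbing $(2-2n)(C_{A},X_{\infty})$ into the local terms); and your Step 4 is verbatim the paper's proof of the corollary (Schur plus duality for $h^{0},h^{2}$, then the Euler characteristic identity for $h^{1}$).

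The genuine gap is Step 3, which you correctly identify as the heart but then only assert. Two things there are not a ``direct valuation count.'' First, you take for granted that the $n$ Puiseux roots of $\det(yI_{n}-A_{a_{i}})$ are the HTL exponents $q/z$ up to terms of nonnegative order, so that each exponential factor gives one branch of $C_{A}$ through $\infty_{a_{i}}$ with the expected characteristic exponents. That is precisely Proposition \ref{Hensel} of the paper, proved by a gauge-theoretic splitting lemma: one conjugates $G$ by some $X\in\mathrm{GL}(n,\mathbb{C}(\!(z^{1/s})\!))$ toward the normal form, uses $\mathrm{ord}\,(\frac{d}{dz}X)X^{-1}\ge-1$ to control the error, and then diagonalizes $XGX^{-1}$ itself (not just the connection) modulo $z^{-1}\mathbb{C}[\![z^{1/s}]\!]$; the multiplicity-free or regular-semisimple hypothesis is what makes this diagonalization possible, and convergence of the resulting Puiseux roots needs a separate remark. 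Without this step the eigenvalues of $G$ have no a priori relation to the $q_{i}$. Second, your branch count silently assumes every exponential factor is nonzero. When some $q_{i}=0$ (the regular-singular cell, which is exactly the case your hypothesis (2) is designed to allow), the corresponding branch of $C_{A}$ does \emph{not} pass through $\infty_{a_{i}}$, so $r_{C_{A_{a_{i}}}}=m-1$ while the irregularity still sees all $m$ cells; this produces the correction term $(m-r_{C_{G}})$ in Theorem \ref{premilnor} and is handled there by a separate computation. As written, your local identity $\ell_{a_{i}}=m_{a_{i}}$ would fail in that case by the omitted cross terms $2\sum_{j}\mathrm{Irr}(\mathrm{Hom}(M_{E_{q_{m},R_{m}}},M_{E_{q_{j},R_{j}}}))=2\sum_{j}p_{j}$ and the shift in $(C_{G},\zeta)_{\infty}$.
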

This fact has been known by Kamimoto \cite{Kam} and Oshima \cite{Osh}
for Fuchsian differential equations on $\mathbb{P}^{1}$.

Let us look at
the equation
\[
h^{1}_{\mathrm{dR}}(X,j_{!*}(\mathcal{E}\mathrm{nd}\nabla_{A}))=h^{1}(\widetilde{C_{A}},\mathbb{C})
\]
in our main theorem. This can be seen as an analogy of the well-known fact
 on the infinitesimal deformations of  a holomorphic Higgs bundle: the genus of the corresponding spectral curve  is equal to
  half of the dimension of the space of
the infinitesimal deformations, see \cite{Hit} and \cite{Nit}.
 That is to say, cohomology group $H^{1}_{\mathrm{dR}}(X,j_{!*}(\mathcal{E}\mathrm{nd}\nabla_{A}))$
 is known to be identified with the space of isotipical infinitesimal deformations
 of $\nabla_{A}$ by THEOREM 4.10 in \cite{BE} and also see Lemma 4.7 in
 \cite{Ar}. Here the isotipical deformation means the
 deformation of $\nabla_{A}$ under the condition that the HTL-normal forms at $a_{i}$, $i=1,2,\ldots,k$ are kept fixed.
Thus we may say that the main theorem gives an analogy of the fact for holomorphic
Higgs bundles to irregular meromorphic connections following the philosophy of
the nonabelian Hodge correspondence \cite{Sim}, \cite{BB}.

This main theorem is a consequence of the following local study of 
the singularities of the spectral curve.
As it is pointed out in \cite{KS} and \cite{Sz}, the spectral curve $C_{A}$ has intersections with
the line at infinity $X_{\infty}=\overline{T^{*}X}\backslash T^{*}X$ at
$\infty_{a_{i}}=(\infty,a_{i})$, $i=1,2,\ldots,k$
and these intersection points may have singularities resulting from the irregular
singularities of the corresponding differential equation and the Higgs bundle.
We investigate the singularities of the irregular spectral curve and show that the Milnor number of $C_{A}$ at $\infty_{a_{i}}$
can be computed from the Komatsu-Malgrange irregularity of the
corresponding local differential module $M_{A_{a_{i}}}$ as follows.

\begin{thm}[Theorem \ref{MILNER}]
	The Milnor number of $C_{A}$ at $\infty_{a_{i}}$ for each $i=1,2,\ldots,k$
	is
	\[
	\mu(C_{A})_{\infty_{a_{i}}}=-\delta(\mathrm{End}_{\mathbb{C}(\!\{z_{a_{i}}\}\!)}(M_{A_{a_{i}}}))
	-r_{C_{A_{a_{i}}}}
	+2(n-1)(C_{A},X_{\infty})_{\infty_{a_{i}}}+1.
	\]
\end{thm}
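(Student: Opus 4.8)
The plan is to reduce the computation of the Milnor number to the local geometry of the spectral-curve germ at $\infty_{a_i}$ and then translate that geometry into the Hukuhara--Turrittin--Levelt (HTL) data of $M_{A_{a_i}}$. First I would localize: choosing the coordinate $z=z_{a_i}$ on $X$ and the fiber coordinate $w=1/y$ on the compactified cotangent bundle, the germ of $C_A$ at $\infty_{a_i}=(z,w)=(0,0)$ becomes a reduced plane-curve germ in the smooth surface $\overline{T^*X}$, cut out by $\det(yI_n-A)$ rewritten in $(z,w)$. Since $\overline{T^*X}$ is a smooth surface this germ is planar, so the classical invariants of plane-curve singularities apply.

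The main tool is the Milnor--Jung formula
\[
\mu = 2\delta_{\mathrm{curve}} - r + 1,
\]
relating the Milnor number $\mu$, the delta invariant $\delta_{\mathrm{curve}}=\sum_{j<k}(C_j,C_k)_{\infty_{a_i}}+\sum_j\delta(C_j)$, and the number $r$ of local branches of the germ. I would identify $r$ with the invariant $r_{C_{A_{a_i}}}$ (the number of branches of $C_A$ through $\infty_{a_i}$, equivalently the number of points of $\widetilde{C_A}$ over $\infty_{a_i}$), so that the problem reduces to computing $2\delta_{\mathrm{curve}}$ and matching it with $-\delta(\mathrm{End}_{\mathbb{C}(\{z_{a_i}\})}(M_{A_{a_i}}))+2(n-1)(C_A,X_\infty)_{\infty_{a_i}}$.

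The heart of the argument is a dictionary between branch intersections and HTL slopes. Under hypothesis (1) or (2) the eigenvalues $y_1,\ldots,y_n$ of $A$ are Puiseux series whose polar parts are the derivatives of the exponential factors $q_1,\ldots,q_n$ of the HTL decomposition of $M_{A_{a_i}}$, and the branches of $C_A$ are the images $w=1/y_j$. For two unramified branches parametrized by $z$,
\[
(C_j,C_k)_{\infty_{a_i}} = \mathrm{ord}_z\!\left(\frac{1}{y_j}-\frac{1}{y_k}\right) = (C_j,X_\infty)_{\infty_{a_i}} + (C_k,X_\infty)_{\infty_{a_i}} - d_{jk},
\]
where $d_{jk}$ is the pole order of $y_j-y_k$ and $d_{jk}=\mathrm{sl}(q_j-q_k)+1$, with $\mathrm{sl}(\cdot)$ the slope. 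Summing over pairs, the tangency terms give $\sum_{j<k}\bigl[(C_j,X_\infty)_{\infty_{a_i}}+(C_k,X_\infty)_{\infty_{a_i}}\bigr]=(n-1)(C_A,X_\infty)_{\infty_{a_i}}$, which after the factor $2$ of Milnor--Jung produces $2(n-1)(C_A,X_\infty)_{\infty_{a_i}}$; meanwhile the $d_{jk}$ terms assemble the slopes into the Komatsu--Malgrange irregularity of $\mathrm{End}(M_{A_{a_i}})=\bigoplus_{j,k}E^{q_j-q_k}\otimes\mathrm{Hom}(R_k,R_j)$. Substituting into Milnor--Jung and recognizing the resulting combination $\mathrm{Irr}(\mathrm{End}(M_{A_{a_i}}))+\mathrm{rk}(\text{irregular part})$ as the local invariant $\delta(\mathrm{End}_{\mathbb{C}(\{z_{a_i}\})}(M_{A_{a_i}}))$ yields the stated identity.

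I expect the main obstacle to be the ramified case, where the branches are genuine Puiseux series with nontrivial ramification indices, so that $r<n$, the individual branches $C_j$ are singular with $\delta(C_j)>0$, and the slopes $\mathrm{sl}(q_j-q_k)$ are fractional. The naive parametrization by $z$ then breaks down and the intersection multiplicities must be computed from ramified parametrizations, with the ramification organized by the Galois action on exponential factors that also governs the HTL normal form; reconciling the branch-wise delta invariants $\sum_j\delta(C_j)$ and the fractional-slope contributions with the integrality of $\mathrm{Irr}(\mathrm{End}(M_{A_{a_i}}))$ and with $r_{C_{A_{a_i}}}$ is where the careful bookkeeping lies. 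The two hypotheses (1) and (2) enter precisely to guarantee that $C_A$ is reduced and that its branches are faithfully recorded by the HTL data, so that no eigenvalue coincidences spoil the dictionary.
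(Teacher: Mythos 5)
Your proposal follows essentially the same route as the paper: factor the characteristic polynomial according to the HTL data (Propositions \ref{Hensel}, \ref{Galois}, \ref{rss}), compute pairwise branch intersections and branch-wise singularity invariants from the orders of differences of the eigenvalue Puiseux series and identify them with irregularities of the Hom-modules (Propositions \ref{intersection}, \ref{milnor}), then assemble via $\mu=2\delta-r+1$ and use Schur's lemma to recognize the combination as $\delta(\mathrm{End}(M_{A_{a_i}}))$. The ramified bookkeeping and the degenerate case of a vanishing exponential factor that you flag as the main obstacles are precisely what the paper's Theorem \ref{premilnor} carries out with the $t^{r_i}$-parametrizations and the separate treatment of $q_m=0$.
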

Here $(C,C')_{\infty_{a}}$ is the intersection number of divisors $C, C'$
at $\infty_{a}$, $r_{C_{A_{a_{i}}}}$ is the number of branches of the germ $C_{A_{a_{i}}}$
and
\begin{align*}
\delta(\mathrm{End}_{\mathbb{C}(\!\{z\}\!)}(M_{A_{a}}))&=
	\mathrm{rank\,}\mathrm{End}_{\mathbb{C}(\!\{z_{a}\}\!)}(M_{A_{a}})+\mathrm{Irr}(\mathrm{End}_{\mathbb{C}(\!\{z_{a}\}\!)}(M_{A_{a}}))\\
	 &\quad -\mathrm{dim\,}_{\mathbb{C}}\mathrm{Hom}_{\mathbb{C}(\!(z_{a})\!)}(\mathrm{End}_{\mathbb{C}(\!(z_{a})\!)}(\widetilde{M_{A_{a}}}),
	\mathbb{C}(\!(z_{a})\!))^{\text{hor}}.
\end{align*}
By using the $\delta$-invariant of a singularity of a plane curve germ, this formula can be written in
a simpler form.
\begin{cor}[Remark \ref{Delta}]
	\[
	2\delta(C_{A_{a_{i}}})-2(n-1)(C_{A},X_{\infty})_{\infty_{a_{i}}}=-\delta(\mathrm{End}_{\mathbb{C}(\!\{z_{a_{i}}\}\!)}(M_{A_{a_{i}}})).
	\]
\end{cor}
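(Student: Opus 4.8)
The plan is to combine Theorem~\ref{MILNER} with the classical Milnor--Jung relation for a reduced plane curve germ. Recall that for a reduced plane curve singularity with Milnor number $\mu$, delta-invariant $\delta$, and number of branches $r$, Milnor's formula reads
\[
\mu = 2\delta - r + 1.
\]
Here the germ in question is $C_{A_{a_i}}$, the local branch of the spectral curve $C_A$ at the point at infinity $\infty_{a_i}$, viewed as a plane curve germ on the surface $\overline{T^{*}X}$. Provided this germ is reduced, the formula applies with $\mu = \mu(C_A)_{\infty_{a_i}}$, $\delta = \delta(C_{A_{a_i}})$ and $r = r_{C_{A_{a_i}}}$.

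First I would substitute the identity $\mu(C_A)_{\infty_{a_i}} = 2\delta(C_{A_{a_i}}) - r_{C_{A_{a_i}}} + 1$ into the Milnor number computed in Theorem~\ref{MILNER}, obtaining
\[
2\delta(C_{A_{a_i}}) - r_{C_{A_{a_i}}} + 1 = -\delta(\mathrm{End}_{\mathbb{C}(\!\{z_{a_{i}}\}\!)}(M_{A_{a_{i}}})) - r_{C_{A_{a_{i}}}} + 2(n-1)(C_A,X_\infty)_{\infty_{a_i}} + 1.
\]
The additive contributions $-r_{C_{A_{a_i}}}$ and $+1$ occur on both sides and cancel, which leaves
\[
2\delta(C_{A_{a_i}}) = -\delta(\mathrm{End}_{\mathbb{C}(\!\{z_{a_{i}}\}\!)}(M_{A_{a_{i}}})) + 2(n-1)(C_A,X_\infty)_{\infty_{a_i}}.
\]
Transposing the intersection-number term to the left then yields exactly the asserted equality.

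The only genuinely delicate point is checking that the Milnor--Jung formula is applicable, i.e.\ that $C_{A_{a_i}}$ is a \emph{reduced} curve germ so that $\mu$, $\delta$ and $r$ are simultaneously defined and tied together by $\mu = 2\delta - r + 1$. Under the standing hypotheses that $C_A$ is irreducible and that each local normal form is multiplicity free or regular semisimple, the characteristic polynomial of the germ $A_{a_i}$ is square-free, so the germ is reduced and its branches are precisely the ones counted by $r_{C_{A_{a_i}}}$ in Theorem~\ref{MILNER}. Granting this, the corollary carries no further content beyond Theorem~\ref{MILNER}: it is simply that theorem rewritten through the classical relation among $\mu$, $\delta$ and $r$, with the number-of-branches terms cancelling.
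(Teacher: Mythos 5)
Your proof is correct and follows essentially the same route as the paper: Remark \ref{Delta} introduces $\delta(C_{A_{a}})$ precisely by $\delta=\frac{1}{2}(\mu+r-1)$, notes that the germ is reduced by the multiplicity free condition, and then substitutes into the formula of Theorem \ref{MILNER} exactly as you do, with the $-r$ and $+1$ terms cancelling. The only cosmetic difference is that you invoke the Milnor--Jung relation as a theorem tying together independently defined invariants, whereas the paper takes that relation as the \emph{definition} of the $\delta$-invariant (citing \cite{Mil} for its geometric meaning), so your reducedness check serves only to guarantee agreement with the usual $\delta$-invariant.
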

\subsection*{Acknowledgements}
The author would like to express his gratitude to Professors Shingo Kamimoto and  Toshio Oshima who gave him many inspirations through fruitful discussions.
The essential part of the idea to prove the main theorem
is based on their pioneering works in the case of Fuchsian differential equations on $\mathbb{P}^{1}$. The most part of this work had been done when
the author was a member of the Department of Mathematics in Josai University
and the work would have never been completed without the support from Josai University.
Finally the author would like to thank Professor Akane Nakamura.
Many discussions with her were very much inspiring and encouraging.
\section{Spectral curves of differential equations}\label{spectral curve}
\subsection{Compactified cotangent bundle on a Riemann surface}
Let $X$ be a compact Riemann surface of genus $g$ and
 consider a compactification of $T^{*}X$ defined by
\[
\overline{T^{*}X}:=\mathbb{P}(\mathcal{O}_{X}\oplus T^{*}X)
\]
which is the projective bundle of the vector bundle $\mathcal{O}_{X}\oplus T^{*}X$. The complement of $T^{*}X$ is denoted by  $X_{\infty}:=\overline{T^{*}X}\backslash
T^{*}X$.
The natural projection $$\pi\colon \overline{T^{*}X}\rightarrow X$$ enables us to
regard this surface as a ruled surface.
Thus the Neron-Severi group $\mathrm{NS\,}\overline{T^{*}X}:=\mathrm{Pic\,}\overline{T^{*}X}/\mathrm{Pic\,}^{0}\overline{T^{*}X}$ is generated by $X_{0}$, the
zero section of $T^{*}X$ and a fiber $f$,
namely,
\[
\mathrm{NS\,}\overline{T^{*}X}\cong \mathbb{Z}X_{0}\oplus \mathbb{Z}f.
\]
This lattice has the $\mathbb{Z}$-bilinear form determined by the intersection
numbers of generators,
$$
(X_{0},f)=1,\quad (f,f)=0,\quad (X_{0},X_{0})=-\mathrm{deg\ }T^{*}X=2g-2.$$

\subsection{Spectral curve of differential equation}
Let $$D:=\{a_{1},a_{2},\ldots,a_{p}\}\subset X$$ be a finite set.
We consider a differential equation with poles on $D$,
\[
dw=Aw
\]
where $A\in M(n,\Omega_{X}(*D)(X))$. We may assume that the set of all
poles of $A$ is exactly $D$.

Let us define the spectral curve of this differential equation through the 
notion of the \emph{lambda connection} as below.	
For $\lambda\in \mathbb{C}$, $\nabla_{\lambda}:=\lambda d-A$ defines a
	lambda connection on the trivial bundle $\mathcal{O}_{X}^{\oplus n}$,
	i.e., a $\mathbb{C}$-linear map $$\nabla_{\lambda}\colon \mathcal{O}_{X}^{\oplus n}\rightarrow \mathcal{O}_{X}^{\oplus n}\otimes \Omega_{X}(*D)$$
	satisfying $$\nabla_{\lambda}(fv)=\lambda df\otimes v+f\otimes \nabla_{\lambda}v$$
	for $f\in \mathcal{O}_{X}, v\in \mathcal{O}_{X}^{\oplus n}.$
	Then the \emph{spectral curve} $C_{A}$ is a divisor on $\overline{T^{*}X}$ 
	defined as the zero locus of the charachterisitic polynomial
	of the  Higgs bundle
	$(\mathcal{O}_{X}^{\oplus n},\nabla_{0})$ in the following way.
First let us take a complex atlas $\{(U_{i},z_{i})\}_{i=1,2,\ldots,k}$
of $X$ with an open covering $X=\bigcup_{i=1}^{k}U_{i}$ and
local coordinates $z_{i}\colon U_{i}\rightarrow \mathbb{C}$.
On this local coordinate system
the canonical 1-form $\theta\in \Omega_{T^{*}X}(T^{*}X)$ can be expressed as
$\theta=\eta_{i} dz_{i}$ with the fiber coordinate $\eta_{i}$
of $T^{*}X$ on $U_{i}$  for  $\ i=1,2,\ldots,k$.
Then the canonical 1-form $\theta$ extends to the meromorphic 1-form
$\bar\theta$ on $\overline{T^{*}X}$ of the form $$\bar\theta=\frac{\eta_{i}}{\zeta_{i}}dz_{i},\quad
i=1,2,\ldots,k,$$where $[\zeta_{i}\colon \eta_{i}]\in \mathbb{P}^{1}$ is
the fiber coordinate of $\overline{T^{*}X}=\mathbb{P}(\mathcal{O}_{X}\oplus T^{*}X)$
 on $U_{i}$.

Let us denote the trivialization of $A$ on $U_{i}$ by $A=A_{i}(z_{i})dz_{i}$,
$A_{i}(z_{i})\in M(n,\mathbb{C}(z_{i}))$.
Then the pullback $\pi^{*}A$ by the projection $\pi\colon \overline{T^{*}X}\rightarrow X$
can be written in the same form
\[
\pi^{*}A([\zeta_{i}:\eta_{i}],z_{i})=A_{i}(z_{i})dz_{i}
\]
on $\pi^{-1}(U_{i})\cong \mathbb{P}^{i}\times U_{i}$.

Then $\mathrm{det}\left(\frac{\eta_{i}}{\zeta_{i}}I_{n}-A_{i}(z_{i})\right)$
gives an meromorphic function of $\pi^{-1}(U_{i})$ for each
$i=1,2,\ldots, k$. Since the compatibility  follows immediately from the definition,
 the collection
$$\left\{
\left(
	\pi^{-1}(U_{i}),\,\mathrm{det}\left(\frac{\eta_{i}}{\zeta_{i}}I_{n}-A_{i}\right)
\right)
\right\}_{i=1,2,\ldots,k}$$
defines a Cartier divisor
on $\overline{T^{*}X}$.
The corresponding Weil divisor is the 
\emph{spectral curve} of the differential equation $dw=Aw$ and denoted by

\[
C_{A}\subset \overline{T^{*}X}.
\]
\subsection{Arithmetic genus of spectral curve}
We denote the divisor class of the spectral curve $C_{A}$ by the same notation.
The arithmetic genus $g_{a}(C_{A})$ of $C_{A}$ can be obtained by the genus formula
\[
	g_{a}(C_{A})=\frac{1}{2}(C_{A},C_{A}+K_{\overline{T^{*}X}})+1.
\]
Our complex surface $\overline{T^{*}X}$ is a ruled surface of which
the Neron-Severi group is well-understood. Thus  standard argument
enables us to examine the explicit value of  $g_{a}(C_{A})$,
see V.2 in \cite{Ha} and \cite{DM} for example.

Let us first determine the coefficients $a,\,b$ in the expression $C_{A}=aX_{0}+bf\in \mathrm{NS\,}\overline{T^{*}X}$.
Since the projection $\pi|_{C_{A}}\colon C_{A}\rightarrow X$ is of degree $n$,
we have
\[
(C_{A},f)=n.
\]
Thus
\[
n=(C_{A},f)=(aX_{0}+bf,f)=a.
\]
Next we note that $(X_{\infty}, X_{0})=0$
and $(X_{\infty},f)=1$.
This shows that
\[
b=(nX_{0}+bf)X_{\infty}=(C_{A},X_{\infty})
\]
and we have
\[
C_{A}=nX_{0}+(C_{A},X_{\infty})f\in \mathrm{NS\,}\overline{T^{*}X}.
\]

Also note that
\[
K_{\overline{T^{*}X}}=-2X_{0}+(2g-2+\mathrm{deg\,}T^{*}X)f=-2X_{0}+(4g-4)f.
\]
Finally, the genus formula leads us to
\begin{equation}\label{arithmetic}
	\begin{split}
g_{a}(C_{A})&=\frac{1}{2}(nX_{0}+(C_{A},X_{\infty})f,(n-2)X_{0}+((C_{A},X_{\infty})+4g-4)f)+1\\
&=\frac{1}{2}(n^{2}(2g-2)+(2n-2)(C_{A},X_{\infty}))+1.
	\end{split}
\end{equation}

\section{Local formal theory on differential equations}
Here we recall the Hukuhara-Turrittin-Levelt theory on local structure of
differential equations and the notion of irregularity introduced
by Komatsu \cite{Kom} and Malgrange \cite{Mal}.
\subsection{Differential modules over differential fields}
First let us fix notation. Let $\mathbb{C}[\![z]\!]$ and  $\mathbb{C}(\!(z)\!)$
denote the ring of formal power series and the field of formal Laurent series
respectively. Similarly $\mathbb{C}\{z\}$ and $\mathbb{C}(\!\{z\}\!)$
denote the ring of convergent power series and the field of convergent Laurent series.
Let $\mathcal{P}:=\bigcup_{s\in \mathbb{Z}_{>0}}\mathbb{C}(\!(z^{\frac{1}{s}})\!)$
be the field of Puiseux series. Also $\mathcal{P}^{\text{conv}}$ denote the
field of convergent Puiseux series.
Set $\mathcal{P}^{+}:=\bigcup_{s\in \mathbb{Z}_{>0}}\mathbb{C}[\![z^{\frac{1}{s}}]\!] $ $\mathcal{P}^{-}:=\bigcup_{s\in\mathbb{Z}_{>0}}z^{-\frac{1}{s}}\mathbb{C}[z^{-\frac{1}{s}}].$
Then we can decompose
\[
\mathcal{P}=\mathcal{P}^{-}\oplus \mathcal{P}^{+}.
\]
The order of  $f(z)=\sum_{r\in \mathbb{Q}}a_{r}z^{r}$ is the number
$$\mathrm{ord }f(z):=\mathrm{min}\{r\in \mathbb{Q}\mid a_{r}\neq 0\}.$$
Similarly, the order of Puiseux series
$G(z)=\sum_{r\in \mathbb{Q}}G_{r}z^{r}\in \mathcal{P}\otimes_{\mathbb{C}}M(n,\mathbb{C})$ with matrix coefficients is defined
by $\mathrm{ord\ }G(z):=\mathrm{min}\{r\in\mathbb{Q}\mid G_{r}\neq \mathbf{0}_{n}\}$
where $\mathbf{0}_{n}$ is the zero matrix of size $n$.

Let $\mathcal{K}$ be one of the following fields: $\mathbb{C}(\!(z)\!)$,
$\mathbb{C}(\!\{z\}\!)$,
 $\mathbb{C}(\!(z^\frac{1}{s})\!)$, $\mathbb{C}(\!\{z^{\frac{1}{s}}\}\!)$, $\mathcal{P}$ and $\mathcal{P}^{\text{conv}}$.
A differential module $M$ over $\mathcal{K}$ is a $\mathcal{K}$-module
with the derivation $\nabla_{M}\in \mathrm{End}_{\mathbb{C}}(M)$
satisfying the Leibniz rule $\nabla_{M}(km)=\frac{d}{dz}k\cdot m+k\cdot\nabla_{M}m$
for $k\in \mathcal{K}$ and $m\in M$.
Suppose that  $M$ is finite of rank $n$ over $\mathcal{K}$ and choose a basis $\mathbf{e}=\{e_{1},e_{2},\ldots,e_{n}\}$.
Then
the  matrix $G={}^{t}(g_{i,j})_{1\le i,j\le n}$
defined by
\[
\nabla_{M}e_{i}=\sum_{j=1}^{n}g_{i,j}e_{j}
\]
gives the  matrix form of $\nabla_{M}\in \mathrm{End}_{\mathbb{C}}(M)$,
that is
\[
	\frac{d}{dz}-G\in \mathrm{End}_{\mathbb{C}}(\mathcal{K}^{\oplus n}).
\]
We call $G$ the \emph{matrix of $\nabla_{M}$ with respect to $\mathbf{e}$}.
Conversely, $G\in M(n,\mathcal{K})$ defines a differential module
$M_{G}:=\mathcal{K}^{\oplus n}$ with the derivation $\nabla_{M_{G}}:=
\frac{d}{dz}-G$.

For two matrices $G,\,G'$ of $M$, there exists a base change matrix $X\in
\mathrm{GL}(n,\mathcal{K})$ and we have
\[
G'=XGX^{-1}+\left(\frac{d}{dz}X\right)X^{-1}.
\]

Let us recall some operations on finite differential modules.
For differential modules $M$ and $M'$,
the direct product $M\oplus M'$ is naturally defined as
$\mathcal{K}$-modules equipped with the derivation $$\nabla_{M\oplus M'}(m+m'):=\nabla_{M}m+\nabla_{M'}m'\quad(m\in M,\,n'\in M').$$
Also we can define the tensor product $M\otimes_{\mathcal{K}} M'$ with the derivation
\[
\nabla_{M\otimes_{\mathcal{K}} M'}(m\otimes m'):=\nabla_{M}m\otimes m'+m\otimes \nabla_{M'}m'
\quad(m\in M,\,n'\in M').
\]
The dual module of $M$ is $M^{*}:=\mathrm{Hom}_{\mathcal{K}}(M,\mathcal{K})$
with the derivation $\nabla_{M^{*}}$ satisfying the following.
If $G$ is the matrix of $\nabla_{M}$ with respect to a basis $\mathbf{e}$,
then $-{}^{t}G$ is the matrix of $\nabla_{M^{*}}$ with respect to the
dual basis $\mathbf{f}$ of $\mathbf{e}$.

The identification $$\mathrm{Hom}_{\mathcal{K}}(M,M')\cong
M^{*}\otimes_{\mathcal{K}} M$$ induces the differential module structure on $\mathrm{Hom}_{\mathcal{K}}(M,M').$

\subsection{Hukuhara-Turrittin-Levelt normal forms}
We shall review the Hukuhara-Turrittin-Levelt theory which gives a formal classification of local differential equations.
\if0

\begin{df}\normalfont
	A \emph{Hukuhara-Turrittin-Levelt normal form} or  an \emph{HTL normal form}
	for short means the following matrix \[
	\mathrm{diag}\left(
	q_{1}(z^{-\frac{1}{s}})I_{n_{1}}+R_{1},\ldots,
	q_{m}(z^{-\frac{1}{s}})I_{n_{m}}+R_{m}
	\right)	z^{-1}\in M(n,\mathbb{C}(\!(z^{\frac{1}{s}})\!))
	\]
	where $q_{i}(t)\in t\mathbb{C}[t]$ satisfying $q_{i}(t)\neq q_{j}(t)$
	if $i\neq j$, and $R_{i}\in M(n_{i},\mathbb{C})$.
\end{df}
\fi
We use the notation
\[
\mathrm{diag}(A_{1},A_{2},\ldots,A_{k})
\]
which stands for a block diagonal matrix with the diagonal entries
$A_{i}\in M(n_{i},\mathcal{K})$.
Recall that the substitution $\xi\colon f(z)\mapsto f(e^{2\pi i}z)$ for $f(z)\in \mathbb{C}(\!(z^{\frac{1}
s})\!)$ generates the Galois group
\[
	\mathrm{Gal}(\mathbb{C}(\!(z^{\frac{1}{s}})\!)/\mathbb{C}(\!(z)\!))
	\cong \mu_{s}
\]
where $\mu_{s}$ is the cyclic group which consists of $s$th roots of 1 in
$\mathbb{C}$.

\begin{df}[HTL cell]\normalfont
	Take $q(z)\in \mathcal{P}^{-}$ and set $r=\mathrm{min}\{s\in
	\mathbb{Z}_{>0}
	\mid q(z)\in z^{-\frac{1}{s}}\mathbb{C}[z^{-\frac{1}{s}}]\}$.
	Then the \emph{elementary Hukuhara-Turrittin-Levelt cell}
	$E_{q(z),R}$ for the above $q(z)$ and $R\in M(n,\mathbb{C})$
	is
	\begin{multline*}
		E_{q(z),R}:=\\
		\mathrm{diag}\left(
	q(z)I_{n}+R,\xi(q)(z)I_{n}+R,\ldots,
	\xi^{r-1}(q)(z)I_{n}+R
	\right)z^{-1}\\\in M(rn,\mathbb{C}(\!(z^{\frac{1}{r}})\!))
\end{multline*}
	Here we call the integers $n$ and $r$  \emph{multiplicity} and \emph{ramification index} of
	$E_{q(z),R}$ respectively.
\end{df}

\begin{df}[HTL normal form]\normalfont
A \emph{Hukuhara-Turrittin-Levelt normal form} is a matrix
\[
\mathrm{diag}(E_{q_{1}(z),R_{1}},\ldots,E_{q_{m}(z),R_{m}})
\]
with elementary HTL cells $E_{q_{i}(z),R}$ for $i=1,2,\ldots,m$
such that
\[
\mathrm{Gal}(\mathcal{P}/\mathbb{C}(\!(z)\!))\cdot q_{i}(z)\cap
\mathrm{Gal}(\mathcal{P}/\mathbb{C}(\!(z)\!))\cdot q_{j}(z)=\emptyset,
\quad\quad \text{if }i\neq j.
\]
\end{df}

\begin{thm}[Hukuhara-Turrittin-Levelt, \cite{Huk}, \cite{Tur}, \cite{Lev}]\label{HTL}
Let $M$ be a differential module over $\mathbb{C}(\!\{z\}\!)$ of rank $n$
and $\widetilde{M}:=\mathbb{C}(\!(z)\!)\otimes_{\mathbb{C}(\!\{z\}\!)}M$
the formalization of $M$.
Then there exists an HTL normal form
\[
\mathrm{diag}(E_{q_{1}(z),R_{1}},\ldots,E_{q_{m}(z),R_{m}})
\]
as a matrix of $\overline{M}:=\mathcal{P}\otimes_{\mathbb{C}(\!(z)\!)}\widetilde{M}$
with respect to a suitable basis.
Furthermore, if two differential modules  $M$ and $M'$ over
$\mathbb{C}(\!\{z\}\!)$ share a same HTL normal form, then
$\widetilde{M}\cong \widetilde{M'}$.
\end{thm}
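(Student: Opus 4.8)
The plan is to prove the two assertions separately: first the existence of an HTL normal form over the Puiseux field $\mathcal{P}$, and then the descent statement that a common normal form forces $\widetilde{M}\cong\widetilde{M'}$. Throughout I work purely formally; since the statement concerns only $\widetilde{M}$ and $\overline{M}$, no convergence or summability enters (those are the genuinely analytic, Stokes-theoretic phenomena, which are irrelevant here), so I may freely pass to $\mathbb{C}(\!(z)\!)$ and to its algebraic closure $\mathcal{P}=\bigcup_{s}\mathbb{C}(\!(z^{1/s})\!)$.

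For existence I would argue by induction on the rank $n$, using the Newton polygon of $\widetilde{M}$ as a secondary termination invariant. Fix a matrix $G\in M(n,\mathbb{C}(\!(z)\!))$ of $\nabla_{\widetilde M}$, normalized so that $z^{N}G$ is holomorphic with nonzero leading coefficient $A_{0}$ for the least possible $N$. \textbf{Splitting case.} If $A_{0}$ has at least two distinct eigenvalues, the classical splitting lemma produces, after a formal gauge transformation $X\in\mathrm{GL}(n,\mathbb{C}(\!(z)\!))$, a block form $G'=\mathrm{diag}(G_{1},G_{2})$ with each $G_{i}$ of strictly smaller size, and the induction hypothesis applies to each block. \textbf{Single-eigenvalue case.} If $A_{0}=\lambda I+(\text{nilpotent})$, I first tensor $\widetilde{M}$ by the rank-one module whose connection subtracts the full scalar polar part $q(z)z^{-1}\,dz$; either this strictly lowers the Poincaré rank, so induction on the Newton polygon applies, or the remaining leading term is nilpotent with pole order pinned. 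In the latter situation I perform Turrittin's ramification $z\mapsto z^{1/s}$: over $\mathbb{C}(\!(z^{1/s})\!)$ a shearing $\mathrm{diag}(1,z^{1/s},z^{2/s},\dots)$ spreads a single Jordan block into a leading matrix with distinct eigenvalues, returning us to the splitting case at the cost of enlarging the ramification index. Tracking the slopes shows the procedure terminates and yields, over some $\mathbb{C}(\!(z^{1/s})\!)\subset\mathcal{P}$, a sum of blocks $(q_{i}(z)I+R_{i})z^{-1}$ with $q_{i}\in\mathcal{P}^{-}$ and $R_{i}$ constant. Regrouping each block with its Galois conjugates $\xi(q_{i}),\dots,\xi^{r_{i}-1}(q_{i})$ assembles the elementary cells $E_{q_{i},R_{i}}$, and the disjoint-orbit condition is exactly the statement that distinct $q_{i}$ lie in distinct $\mathrm{Gal}(\mathcal{P}/\mathbb{C}(\!(z)\!))$-orbits.

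For the descent statement I would use Galois descent along the finite layer $\mathbb{C}(\!(z^{1/s})\!)/\mathbb{C}(\!(z)\!)$ containing all the relevant fractional powers. The substitution $\xi$ generates its (cyclic) Galois group and acts semilinearly on $\overline{M}=\mathcal{P}\otimes_{\mathbb{C}(\!(z)\!)}\widetilde{M}$ through the first factor, commuting with $\nabla$; Galois descent recovers $\widetilde{M}=\overline{M}^{\,\xi}$ as the fixed differential module. The key point is that each elementary cell $E_{q(z),R}$ is by construction the induction (Weil restriction) from $\mathbb{C}(\!(z^{1/r})\!)$ to $\mathbb{C}(\!(z)\!)$ of the datum $(q(z)I+R)z^{-1}$: the cyclic arrangement of $q,\xi(q),\dots,\xi^{r-1}(q)$ along the diagonal is precisely a descent datum, so each cell, and hence the whole HTL normal form, descends canonically to a module $N_{0}$ over $\mathbb{C}(\!(z)\!)$ with $\mathcal{P}\otimes N_{0}$ isomorphic to the normal form. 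Since the existence part gives $\overline{M}\cong\mathcal{P}\otimes N_{0}\cong\overline{M'}$ compatibly with these canonical descent data, taking $\xi$-invariants yields $\widetilde{M}\cong N_{0}\cong\widetilde{M'}$.

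I expect the single-eigenvalue/nilpotent case of existence to be the main obstacle: showing that the ramification–shearing loop genuinely terminates requires careful bookkeeping of the Newton polygon, so that at each stage either a slope strictly increases toward an integer or the rank strictly drops, and it is exactly here that passage to $\mathcal{P}$ rather than staying over $\mathbb{C}(\!(z)\!)$ is forced. The descent half is conceptually clean once one recognizes the elementary cells as the Galois-induced objects; the only real care there is to check that the isomorphisms furnished by the existence step can be chosen to intertwine the canonical descent data, which holds because the splitting into distinct $\xi$-orbits is itself canonical.
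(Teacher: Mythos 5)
The paper offers no proof of Theorem \ref{HTL}: it is quoted as a classical result with references to Hukuhara, Turrittin and Levelt, so there is no internal argument to measure yours against. Your existence half is the standard Turrittin--Levelt induction (split off blocks via the splitting lemma whenever the leading coefficient of $z^{N}G$ has two distinct eigenvalues, otherwise twist away the scalar polar part and then ramify and shear to recreate a splitting), which is exactly the argument in the cited sources and in \cite{Bal}, \cite{BV}; the paper's Lemma \ref{splitting} is the splitting lemma you invoke, and the Newton-polygon bookkeeping for termination that you flag is indeed the only delicate point there. As a sketch this half is sound.

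The descent half, however, has a genuine gap precisely where you call it ``conceptually clean.'' An isomorphism $\overline{M}\cong\mathcal{P}\otimes_{\mathbb{C}(\!(z)\!)}N_{0}\cong\overline{M'}$ yields $\widetilde{M}\cong\widetilde{M'}$ only if it can be chosen $\xi$-equivariantly, and the canonicity of the decomposition into Galois orbits of the $q_{i}$ only matches up the isotypic pieces; it does not force equivariance \emph{within} a piece. Concretely, take $n=1$ and $q=0$: gauging $\frac{d}{dz}-\frac{\beta}{z}$ by $X=z^{-1/2}$ produces the matrix $\bigl(\beta-\tfrac{1}{2}\bigr)z^{-1}$, so the rank-one modules with residues $\beta$ and $\beta-\tfrac{1}{2}$ admit the same diagonal normal form with respect to suitable $\mathcal{P}$-bases, yet they are not isomorphic over $\mathbb{C}(\!(z)\!)$. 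This shows that the descent datum induced on the normal form by $\widetilde{M}$ can genuinely differ from the canonical one when the chosen basis involves fractional powers of $z$ acting on the regular-singular part; ruling this out requires either a normalization of the exponents $R_{i}$ (equivalently, of the admissible gauge transformations reaching the normal form) or an explicit check that the resulting $1$-cocycle of $\mathrm{Gal}(\mathbb{C}(\!(z^{1/s})\!)/\mathbb{C}(\!(z)\!))$ in the horizontal automorphism group of each cell is a coboundary. That verification is the actual content of the uniqueness half of the theorem (and of the refined decomposition, Theorem \ref{refinedHTL}, which is the form the paper subsequently uses), and your appeal to the canonicity of the orbit decomposition does not supply it.
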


The HTL normal form induces the following decomposition of $M$.
\begin{thm}[see $(7.15)$ in \cite{BV} and COROLLARY 3.3 in \cite{Sab}]\label{refinedHTL}
	We use the same notation as in Theorem \ref{HTL}. There exists
	a differential module $M_{E_{q_{i}(z),R_{i}}}$ over
	$\mathbb{C}(\!\{z\}\!)$ whose HTL normal form is $E_{q_{i}(z),R_{i}}$
	for each $i=1,2,\ldots,m$ and we have a decomposition
	\[
		\widetilde{M}\cong \bigoplus_{i=1}^{m}\widetilde{M_{E_{q_{i}(z),R_{i}}}}
	\]
	as differential modules over $\mathbb{C}(\!(z)\!).$
\end{thm}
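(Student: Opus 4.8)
The plan is to reduce everything to the uniqueness clause of Theorem \ref{HTL}: if two convergent differential modules carry the same HTL normal form, then their formalizations are isomorphic. Thus it suffices to (i) produce, for each index $i$, a \emph{convergent} module $M_{E_{q_{i}(z),R_{i}}}$ over $\mathbb{C}(\!\{z\}\!)$ whose HTL normal form is exactly $E_{q_{i}(z),R_{i}}$, and (ii) check that the direct sum of these has HTL normal form equal to $\mathrm{diag}(E_{q_{1}(z),R_{1}},\ldots,E_{q_{m}(z),R_{m}})$, the normal form of $M$. Granting (i)--(ii), set $P:=\bigoplus_{i=1}^{m}M_{E_{q_{i}(z),R_{i}}}$; then $M$ and $P$ are two convergent modules sharing the same HTL normal form, so Theorem \ref{HTL} yields $\widetilde{M}\cong\widetilde{P}=\bigoplus_{i=1}^{m}\widetilde{M_{E_{q_{i}(z),R_{i}}}}$, which is the assertion.

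For step (i) I would construct $M_{E_{q_{i}(z),R_{i}}}$ by restriction of scalars (pushforward) along the ramified cover. Writing $w=z^{1/r_{i}}$ with $r_{i}$ the ramification index of the cell, the matrix $(q_{i}(z)I_{n_{i}}+R_{i})z^{-1}$ has convergent (indeed Laurent polynomial in $w$) entries and hence defines a rank $n_{i}$ differential module $N_{i}$ over $\mathbb{C}(\!\{z^{1/r_{i}}\}\!)$. Viewing $\mathbb{C}(\!\{z^{1/r_{i}}\}\!)$ as a degree $r_{i}$ extension of $\mathbb{C}(\!\{z\}\!)$, I let $M_{E_{q_{i}(z),R_{i}}}$ be $N_{i}$ regarded as a module over $\mathbb{C}(\!\{z\}\!)$, a convergent module of rank $r_{i}n_{i}$. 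The standard computation for such a pushforward is that, after base change back to $\mathbb{C}(\!(z^{1/r_{i}})\!)$ and diagonalising the $\mu_{r_{i}}$-action, its matrix becomes the block diagonal of the Galois conjugates $\xi^{j}(q_{i})(z)I_{n_{i}}+R_{i}$, so that its HTL normal form is precisely $E_{q_{i}(z),R_{i}}$. This is the most computational point and the main technical obstacle: one must verify that the pushforward produces the full Galois orbit $\{q_{i},\xi(q_{i}),\ldots,\xi^{r_{i}-1}(q_{i})\}$ with the prescribed residue block $R_{i}$ repeated, and that no further reduction of the ramification occurs. The latter is guaranteed by the minimality of $r_{i}$ in the definition of the elementary cell, which forces the orbit of $q_{i}$ to have size exactly $r_{i}$.

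For step (ii) the key observation is that the HTL normal form of a direct sum is the block diagonal of the normal forms of the summands, provided the resulting matrix is itself a legitimate HTL normal form. By construction the exponential factors of $M_{E_{q_{i}(z),R_{i}}}$ are exactly the Galois orbit of $q_{i}(z)$, and the defining condition on the normal form of $M$, namely $\mathrm{Gal}(\mathcal{P}/\mathbb{C}(\!(z)\!))\cdot q_{i}(z)\cap\mathrm{Gal}(\mathcal{P}/\mathbb{C}(\!(z)\!))\cdot q_{j}(z)=\emptyset$ for $i\neq j$, is precisely the disjointness requirement in the definition of an HTL normal form. Hence $\mathrm{diag}(E_{q_{1}(z),R_{1}},\ldots,E_{q_{m}(z),R_{m}})$ is a genuine HTL normal form, it is the one attached to $P$, and it coincides with that of $M$, completing the argument.

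Finally, I would remark that the decomposition of $\widetilde{M}$ can alternatively be obtained directly by Galois descent: the orbit decomposition of $\overline{M}$ over $\mathcal{P}$ is stable under $\mathrm{Gal}(\mathbb{C}(\!(z^{1/s})\!)/\mathbb{C}(\!(z)\!))$ (the Galois action commuting with $z\,d/dz$), so its invariant projectors descend to $\mathbb{C}(\!(z)\!)$. I prefer the route through the uniqueness of HTL normal forms, however, since it is the shortest and isolates the single genuinely computational input, namely the realization in step (i).
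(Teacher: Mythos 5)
The paper does not actually prove Theorem \ref{refinedHTL}: it is quoted as a known result, with pointers to $(7.15)$ in \cite{BV} and Corollary 3.3 in \cite{Sab}. So there is no in-paper argument to compare against; what you have written is a genuine proof sketch where the paper has only a citation.

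On its merits, your argument is correct and is one of the two standard routes. Reducing to the uniqueness clause of Theorem \ref{HTL} is legitimate: once you exhibit a \emph{convergent} module $P=\bigoplus_{i}M_{E_{q_{i}(z),R_{i}}}$ over $\mathbb{C}(\!\{z\}\!)$ admitting the block-diagonal normal form $\mathrm{diag}(E_{q_{1}(z),R_{1}},\ldots,E_{q_{m}(z),R_{m}})$, the theorem gives $\widetilde{M}\cong\widetilde{P}$, and your check that the disjoint-orbit condition makes the block diagonal a legitimate HTL normal form is exactly what is needed for the direct sum step. You correctly isolate the pushforward computation as the only real content. One bookkeeping point there deserves a sentence: the pushforward along $z=w^{r_{i}}$ can twist the regular (residue) part by a scalar multiple of the identity coming from the Jacobian of the cover, so to land on the prescribed $R_{i}$ you may need to start from a suitably shifted residue on $N_{i}$; this does not affect existence, only which $N_{i}$ you feed in. The sources the paper cites argue instead by the Galois-descent mechanism you mention in your closing remark: the isotypic decomposition of $\overline{M}$ over $\mathcal{P}$ indexed by the orbits $\mathrm{Gal}(\mathcal{P}/\mathbb{C}(\!(z)\!))\cdot q_{i}$ is Galois-stable, its projectors commute with the derivation, and hence descend to a direct sum decomposition of $\widetilde{M}$ over $\mathbb{C}(\!(z)\!)$; the convergent realizations $M_{E_{q_{i}(z),R_{i}}}$ are then produced separately. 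That route has the advantage of not needing the pushforward computation at all, while yours has the advantage of making the summands concrete from the start; both are complete once the flagged computation is carried out.
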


\subsection{Komatsu-Malgrange irregularity}
Let us recall that the \emph{index} of a $\mathbb{C}$-linear endmorphism $\Phi$
is
\[
\chi(\Phi):=\mathrm{dim}_{\mathbb{C}}\mathrm{Ker\,}\Phi-
\mathrm{dim}_{\mathbb{C}}\mathrm{Coker\,}\Phi.
\]
The Komatsu-Malgrange irregularity is an analytic invariant
of local differential equations defined as follows.
\if0
equation $\frac{d}{dz}Y=GY$ with $G\in
M(n,\mathbb{C}(\!\{z\}\!))$.
Then the differential equation $\frac{d}{dz}Y=GY$ defines a $\mathbb{C}$ linear endmorphism

\[
	\begin{array}{cccc}
		\Phi_{G}^{\mathrm{form}}\colon &\mathbb{C}(\!(z)\!)^{\oplus n}&\longrightarrow &\mathbb{C}[\![z]\!]^{\oplus n}\\
		&v&\longmapsto&\left(\frac{d}{dz}-G\right)v
	\end{array},
\]
and also defines
\[
	\begin{array}{cccc}
		\Phi_{G}^{\mathrm{conv}}\colon &\mathbb{C}(\!\{z\}\!)^{\oplus n}&\longrightarrow &\mathbb{C}(\!\{z\}\!)^{\oplus n}\\
		&v&\longmapsto&\left(\frac{d}{dz}-G\right)v
	\end{array}.
\]
\fi

\begin{df}[Komatsu-Malgrange irregularity]\normalfont
	Let $M$ be a finite differential module over $\mathbb{C}(\!\{z\}\!)$
	and  $\widetilde{M}:=\mathbb{C}(\!(z)\!)\otimes_{\mathbb{C}(\!\{z\}\!)}M$
	its formalization.

	Then the \emph{Komatsu-Malgrange irregularity} of $M$ is
	\[
	\mathrm{Irr}(M):=\chi(\nabla_{\widetilde{M}})-\chi(\nabla_{M}).
	\]

\end{df}

If $M$ has the HTL-normal form
\[
	\mathrm{diag}(E_{q_{1}(z),R_{1}},\ldots,E_{q_{m}(z),R_{m}}),
\]

then it is known that the Komatsu-Malgrange irregularity is
\[
\mathrm{Irr}(M)=-\sum_{i=1}^{m}\sum_{j=0}^{r_{i}-1}\mathrm{ord\,}\xi^{j}(q_{i})(z)=-\sum_{i=1}^{m}r_{i}\,\mathrm{ord\,}q_{i}(z).
\]
Here $r_{i}$ are ramification indices of $E_{q_{i}(z),R_{i}}$ for $i=1,2,\ldots,m$.
\section{Local comparison: Milnor formula}
In this section we deal with a local differential module and define its
characteristic polynomial with respect to a fixed basis.
The zero locus of this characteristic polynomial may have a singularity at
infinity which corresponds to the irregular singularity of the differential module.
We shall compare these singularities and obtain a comparison formula
between the irregularity of differential module and the Milnor number of
the characteristic polynomial.
\subsection{Hukuhara-Turrittin-Levelt normal form and decomposition of characteristic polynomial}\label{HTLMILNOR}

\begin{df}[characteristic polynomial]\normalfont
Let us consider a finite differential module $M$ over $\mathbb{C}(\!\{z\}\!)$ of rank $n$.
Fix a matrix $G\in M(n,\mathbb{C}(\!\{z\}\!))$ of $M$ with respect to a basis
$\mathbf{e}$.
Then the \emph{characteristic polynomial of M  with respect to $\mathbf{e}$}
is
\[
\mathrm{det}(z^{\nu}yI_{n}-z^{\nu}G)=z^{n\nu}\mathrm{det}(yI_{n}-G)\in \mathbb{C}\{z\}[y].
\]
Here $\nu\in \mathbb{Z}_{\ge 0}$ is the pole order of $G$.
\end{df}

The characteristic polynomial may have a singularity at $(y,z)=(\infty,0)$.
We shall see that the HTL normal form of $M$ have some information on the
singularity.

\begin{df}[multiplicity free HTL normal form]\label{mfhtl}\normalfont
	An HTL normal form
	\[
		\mathrm{diag}(E_{q_{1}(z),R_{1}},\ldots,E_{q_{m}(z),R_{m}})
	\]
	is said to be \emph{multiplicity free} when all HTL cells $E_{q_{i}(z),R_{i}}$, $i=1,2,\ldots,m$, are multiplicity one,
	namely,
	$R_{i}\in M(1,\mathbb{C})$ for all $i=1,2,\ldots,m$.
\end{df}

\begin{prop}\label{Hensel}
	Let $M$ be a differential module over $\mathbb{C}(\!\{z\}\!)$ of rank $n$
	and fix a matrix $G\in M(n,\mathbb{C}(\!\{z\}\!))$ of $M$.

	Suppose that the $M$ has the multiplicity free HTL normal form
	 \[
		\mathrm{diag}(E_{q_{1}(z),R_{1}},\ldots,E_{q_{m}(z),R_{m}}).
	 \]
	 Then the characteristic polynomial $z^{n\nu}\mathrm{det}\left(yI_{n}-G\right)\in \mathbb{C}\{z\}[y]$
	 decomposes as follows,
	 \[
		z^{n\nu}\mathrm{det}\left(yI_{n}-G\right)=z^{n\nu}\prod_{i=1}^{m}\prod_{j=0}^{r_{i}-1}
		\left(y-\frac{\tilde{q}_{[i,j]}(z)}{z}\right).
	 \]
	 Here $\tilde{q}_{[i,j]}(z)\in \mathcal{P}^{\text{conv}}$ satisfies that
	 \[
		\pr^{-}(\tilde{q}_{[i,j]}(z))= \xi^{j}q_{i}(z)
	 \]
	 for each $i=1,2,\ldots,m,\,j=0,1,\ldots,r_{i}-1$,
	 and $\pr^{-}\colon \mathcal{P}\rightarrow \mathcal{P}^{-}$ is
	 the projection along the decomposition $\mathcal{P}=\mathcal{P}^{-}\oplus \mathcal{P}^{+}.$
\end{prop}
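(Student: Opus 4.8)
The plan is to read off the roots of the characteristic polynomial as convergent Puiseux series and to identify their principal parts with the exponential parts $\xi^{j}q_i(z)$ supplied by the multiplicity free HTL normal form. The characteristic polynomial $P(y,z):=\det(yI_n-G)$ is monic of degree $n$ in $y$ with coefficients in $\mathbb{C}(\!\{z\}\!)$, so by the Newton–Puiseux theorem its roots $\lambda_1(z),\dots,\lambda_n(z)$ lie in the field $\mathcal{P}^{\mathrm{conv}}$ of convergent Puiseux series and $P=\prod_{k=1}^{n}(y-\lambda_k(z))$. Setting $\tilde q_k(z):=z\lambda_k(z)$, the proposition reduces to the identification of multisets
\[
\{\pr^-(\tilde q_k)\}_{k=1}^{n}=\{\,\xi^{j}q_i(z)\mid 1\le i\le m,\ 0\le j\le r_i-1\,\},
\]
because $\pr^-$ extracts the terms of strictly negative order and thus records exactly the terms of $\lambda_k$ of order $<-1$.

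Next I would bring in the formal classification while keeping the characteristic polynomial fixed. By Theorem~\ref{HTL} there is $X\in\mathrm{GL}(n,\mathcal{P})$ with
\[
H:=XGX^{-1}+\Bigl(\tfrac{d}{dz}X\Bigr)X^{-1}=\diag\bigl((\xi^{j}q_i(z)+\rho_i)z^{-1}\bigr)_{i,j},
\]
where each $R_i=\rho_i\in\mathbb{C}$ by multiplicity freeness. Since the characteristic polynomial is invariant under conjugation, $P=\det(yI_n-XGX^{-1})=\det\bigl(yI_n-(H-C)\bigr)$ with $C:=(\tfrac{d}{dz}X)X^{-1}$, so I may compute the $\lambda_k$ as the eigenvalues of $H-C$. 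The diagonal of $H$ already carries the desired principal parts, namely $\pr^-(\xi^{j}q_i+\rho_i)=\xi^{j}q_i$; the whole point is therefore that the gauge-correction $C$ cannot disturb the order-$(<-1)$ part of the eigenvalues. In rank one this is the elementary fact that a logarithmic derivative $x'/x$, $x\in\mathcal{P}^{\times}$, has order $\ge-1$ and hence leaves the order-$(<-1)$ part untouched; I would deduce the matrix statement from the multiplicity free structure.

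Finally I would carry out a Hensel separation. Because the normal form is multiplicity free, the principal parts $\xi^{j}q_i$ are pairwise distinct and of strictly negative order; after the ramification $z=t^{s}$ with $s=\mathrm{lcm}_i r_i$ they become distinct Laurent polynomials in $t$. Peeling off the Newton polygon of $\det(yI_n-(H-C))$ one slope at a time and applying Hensel's lemma at each step — the distinctness of the principal parts providing the required coprimality of the reductions — splits $P$ into linear factors, each root carrying exactly one principal part $\xi^{j}q_i/z$ while the order-$\ge-1$ contribution of $C$ is absorbed into the regular part $\pr^{+}(\tilde q_{[i,j]})$, which also swallows $\rho_i$. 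This yields an indexing $k\leftrightarrow[i,j]$ with $\pr^-(\tilde q_{[i,j]})=\xi^{j}q_i$ and the factorization
\[
z^{n\nu}\det(yI_n-G)=z^{n\nu}\prod_{i=1}^{m}\prod_{j=0}^{r_i-1}\Bigl(y-\tfrac{\tilde q_{[i,j]}(z)}{z}\Bigr);
\]
Galois stability of the $\mathbb{C}\{z\}$-coefficients of $P$ forces $\xi$ to permute the $r_i$ roots inside each block, so the $\tilde q_{[i,j]}$ are genuine convergent Puiseux series.

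The main obstacle is the control of $C=(\tfrac{d}{dz}X)X^{-1}$, which need not be small: off-diagonal entries of the reduction can be highly singular. The multiplicity free hypothesis — scalar residues $R_i$ together with the disjointness of the Galois orbits of the $q_i$ — is precisely what makes the principal parts distinct and lets the slope-by-slope Hensel argument separate them cleanly; for non-scalar $R_i$ the coupling within a single cell could obstruct this separation.
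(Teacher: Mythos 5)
Your overall route is the same as the paper's: pass to the HTL normal form by a gauge transformation $X$, note that the characteristic polynomial only sees the conjugation $XGX^{-1}=H-C$ with $C=(\tfrac{d}{dz}X)X^{-1}$, and use the pairwise distinctness of the principal parts $\xi^{j}q_{i}$ to separate the factors. The genuine gap sits exactly where you yourself flag ``the main obstacle'': you never prove that $C$ cannot disturb the order-$(<-1)$ part of the eigenvalues. Saying that a slope-by-slope Hensel argument ``separates them cleanly'' is not an argument. If $C$ has entries of very negative order (which, as you concede, can happen for a general $X\in\mathrm{GL}(n,\mathcal{P})$), then the Newton polygon of $\det(yI_{n}-(H-C))$ is a priori not the one dictated by the diagonal of $H$, the reductions along its edges need not be the expected separable polynomials, and the peeling can produce roots whose principal parts have nothing to do with the $\xi^{j}q_{i}/z$; already a symmetric off-diagonal perturbation of sufficiently negative order changes the leading behaviour of the eigenvalues of a diagonal matrix. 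The one claim that carries the whole proposition is therefore left unproved, and the multiplicity free hypothesis alone does not supply it.

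The paper closes this hole in two concrete steps. First, it replaces the normal form by $\mathrm{diag}(E^{o}_{q_{1}(z)},\ldots,E^{o}_{q_{m}(z)})$ and uses the bound $\mathrm{ord}\,(\tfrac{d}{dz}X)X^{-1}\ge -1$ to get the congruence $XGX^{-1}\equiv\mathrm{diag}(E^{o}_{q_{1}(z)},\ldots,E^{o}_{q_{m}(z)})\ (\mathrm{mod}\ z^{-1}\mathbb{C}[\![z^{\frac{1}{s}}]\!])$, i.e.\ it controls the gauge correction \emph{before} touching the characteristic polynomial. Second, instead of factoring the polynomial, it applies the splitting Lemma \ref{splitting} repeatedly: this is a pure conjugation, hence preserves the characteristic polynomial exactly, and it diagonalizes $XGX^{-1}$ while preserving the congruence, so the roots are literally the diagonal entries and their $\pr^{-}$-parts are read off; convergence then follows, as in your first step, from the algebraic closedness of $\mathcal{P}^{\text{conv}}$. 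To rescue your Newton--Puiseux framing you would have to supply the analogue of these two steps: either a proof that the coefficients of $\det(yI_{n}-(H-C))$ agree with those of $\det(yI_{n}-\mathrm{diag}(E^{o}_{q_{i}(z)}))$ to sufficiently high order, or an explicit conjugation that block-diagonalizes the perturbed matrix so that each factor visibly carries a single principal part.
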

\begin{proof}

	For $q_{i}(z)$, $i=1,2,\ldots,m$,  define
	\[
		E^{o}_{q_{i}(z)}:=\mathrm{diag}\left(q_{i}(z),\xi(q_{i})(z),
		\ldots,\xi^{r_{i}-1}(q_{i})(z)\right)z^{-1}\in
		M(r_{i},\mathbb{C}(\!(z^{\frac{1}{r_{i}}})\!)).
	\]
	Then the multiplicity free condition leads to
	\[
		\mathrm{diag}(E_{q_{1}(z),R_{1}},\ldots,E_{q_{m}(z),R_{m}})\equiv
		\mathrm{diag}(E^{o}_{q_{1}(z)},\ldots,E^{o}_{q_{m}(z)})
		\quad(\mathrm{mod }z^{-1}\mathbb{C}[\![z^{\frac{1}{s}}]\!]).
	\]
	Here $s:=\mathrm{lcm}\{r_{1},r_{2},\ldots,r_{m}\}$.
	Thus there exists  $X\in \mathrm{GL}(n,\mathbb{C}(\!(z^{\frac{1}{s}})\!))$
	such that
	\begin{align*}
	XGX^{-1}+\left(\frac{d}{dz}X\right)X^{-1}\equiv 	\mathrm{diag}(E^{o}_{q_{1}(z)},\ldots,E^{o}_{q_{m}(z)})
	\quad(\mathrm{mod }z^{-1}\mathbb{C}[\![z^{\frac{1}{s}}]\!]).
	\end{align*}
	Since $\mathrm{ord\ }\left(\frac{d}{dz}X\right)X^{-1}\ge -1$,
	we have
	\[
	XGX^{-1}\equiv  	\mathrm{diag}(E^{o}_{q_{1}(z)},\ldots,E^{o}_{q_{m}(z)})
	\quad(\mathrm{mod }z^{-1}\mathbb{C}[\![z^{\frac{1}{s}}]\!]).
	\]
	Note that all the entries $\xi^{i}(q_{j})(z)$ in $\mathrm{diag}(E^{o}_{q_{1}(z)},\ldots,E^{o}_{q_{m}(z)})$ are mutually different.
	Thus applying the Lemma \ref{splitting} below repeatedly,
	 we can find $X'\in \mathrm{GL}(n,\mathbb{C}(\!(z^{\frac{1}{s}})\!))$
	so that $X'XG(X'X)^{-1}$ is a diagonal matrix and
	\[
		X'XG(X'X)^{-1}\equiv 	\mathrm{diag}(E^{o}_{q_{1}(z)},\ldots,E^{o}_{q_{m}(z)})
	\quad(\mathrm{mod }z^{-1}\mathbb{C}[\![z^{\frac{1}{s}}]\!]).
	\]
	This leads us to the decomposition
	\begin{equation}\label{decomp}
	 \mathrm{det}\left(yI_{n}-G\right)=\prod_{i=1}^{m}\prod_{j=0}^{r_{i}-1}
	 \left(y-\frac{\tilde{q}_{[i,j]}(z)}{z}\right)
 \end{equation}
	with $\tilde{q}_{[i,j]}(z)\in \mathbb{C}(\!(z^{\frac{1}{s}})\!)$ satisfying
	\begin{equation*}
	 \tilde{q}_{[i,j]}(z)\equiv 
	 \xi^{j}(q_{i})(z)\quad(\mathrm{mod\ } \mathbb{C}[\![z^{\frac{1}{s}}]\!])
 \end{equation*}
	for each $i=1,2,\ldots,m,\,j=0,1,\ldots,r_{i}-1.$
	Since the field $\mathcal{P}^{\text{conv}}$
	is algebraically closed, the equation (\ref{decomp}) coincides with
	the decomposition in $\mathcal{P}^{\text{conv}}[y]$. Thus
	the formal Puiseux series $\tilde{q}_{[i,j]}(z)$ should be convergent power series.
\end{proof}
The following lemma is just a slight modification of the standard and well-known argument in the local formal theory of differential equations, so called the splitting lemma, see Lemma 3 in the section
3.2 in \cite{Bal} for example.
\begin{lem}\label{splitting}
	Let us consider $A(t)=t^{r}\sum_{i=0}^{\infty}A_{i}t^{i}\in
	M(n,\mathbb{C}(\!(t)\!))$
	and suppose that
	$A_{0}=\mathrm{diag}(A^{11}_{0},A^{22}_{0})\in M(n,\mathbb{C})$
	with $A^{jj}_{0}\in M(n_{j},\mathbb{C})$, $j=1,2$, and
	the sets of eigenvalues of
	$A_{0}^{11}$ and $A_{0}^{22}$ respectively are disjoint.
	Then there exists
	\[
	T(z)=\begin{pmatrix}
		I_{n_{1}}&T_{12}(t)\\
		T_{21}(z)&I_{n_{2}}
		\end{pmatrix},\quad
	T_{jk}(t)=\sum_{i=1}^{\infty}T^{jk}_{i}t^{i},\ j,k=1,2
	\]
	such that
	\[
	T(t)A(t)T^{-1}(t)=\begin{pmatrix}
		B_{11}(t)&0\\
		0&B_{22}(t)
	\end{pmatrix}
	\]
	where $B_{jj}(t)=t^{r}\sum_{i=0}^{\infty}B^{jj}_{i}t^{i}\in
	M(n_{j},\mathbb{C}(\!(t)\!))$ with $B_{0}^{jj}=A_{0}^{jj}$,
	 $j=1,2.$
\end{lem}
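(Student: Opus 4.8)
The plan is to strip away the differential aspect entirely and recognize the assertion as a purely algebraic block-diagonalization by conjugation, then reduce it to two \emph{decoupled} Riccati equations, each solvable by a triangular recursion whose solvability at every order is forced by the disjointness of the two spectra.

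First I would rewrite the desired identity $T(t)A(t)T^{-1}(t)=\mathrm{diag}(B_{11},B_{22})$ as $T(t)A(t)=\mathrm{diag}(B_{11},B_{22})\,T(t)$. Writing $T=\begin{pmatrix} I_{n_1} & U \\ V & I_{n_2}\end{pmatrix}$ with $U=T_{12}$, $V=T_{21}$, and $A=\begin{pmatrix} a & b \\ c & d \end{pmatrix}$ in the same block sizes, and comparing the four blocks of $TA$ and $\mathrm{diag}(B_{11},B_{22})T$, the diagonal blocks yield $B_{11}=a+Uc$ and $B_{22}=d+Vb$, while the two off-diagonal blocks give
\[
aU-Ud+UcU=b,\qquad dV-Va+VbV=c.
\]
The crucial structural observation is that the first equation involves only $U$ and the second only $V$, so they can be treated independently.

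Next I would normalize by the common factor $t^r$: writing $a=t^r\alpha$, $b=t^r\beta$, $c=t^r\gamma$, $d=t^r\delta$ with $\alpha(0)=A_0^{11}$, $\delta(0)=A_0^{22}$, and — because $A_0$ is block diagonal — $\beta(0)=\gamma(0)=0$, the first equation cancels one factor of $t^r$ throughout and becomes an identity among honest power series,
\[
\alpha U-U\delta+U\gamma U=\beta,
\]
in which I seek $U=\sum_{i\ge1}U_i t^i$. Comparing the coefficient of $t^k$ isolates the Sylvester operator $\Phi_0(X):=A_0^{11}X-XA_0^{22}$ applied to $U_k$, while every remaining term — the convolution tails $\sum_{i\ge1}(\alpha_i U_{k-i}-U_{k-i}\delta_i)$ and the quadratic contribution $\sum_{p+q+s=k}U_p\gamma_q U_s$ (which, since $p,q,s\ge1$, only involves indices $\le k-2$) — depends on $U_1,\dots,U_{k-1}$ alone. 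Since the spectra of $A_0^{11}$ and $A_0^{22}$ are disjoint, $\Phi_0$ is invertible, so $U_k$ is uniquely determined by lower-order data; the recursion thus produces a formal power series $U$ of order $\ge1$, and the analogous argument with $\Phi_0'(X)=A_0^{22}X-XA_0^{11}$ produces $V$. Finally $T=I_n+N$ with $N$ of order $\ge1$ is invertible over $\mathbb{C}[\![t]\!]$, and $B_0^{jj}=A_0^{jj}$ reads off from $B_{11}=t^r(\alpha+U\gamma)$ and $B_{22}=t^r(\delta+V\beta)$ because $U$ and $V$ vanish at $t=0$.

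The one point I would present most carefully — the place where the hypotheses actually enter — is the triangularity of the recursion: one must verify that the quadratic Riccati term never feeds the coefficient $U_k$ back into the $t^k$-equation. This is precisely where the vanishing of the off-diagonal blocks of $A_0$ (equivalently $\beta(0)=\gamma(0)=0$) is used, since it forces $\gamma$ to have order $\ge1$ and hence pushes every quadratic contribution to strictly lower indices; combined with the invertibility of $\Phi_0$ coming from the disjointness of the spectra, this turns the whole scheme into a routine induction, exactly as in the classical splitting lemma.
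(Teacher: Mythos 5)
Your proposal is correct and takes essentially the same route as the paper's proof: both reduce the conjugation identity to the decoupled block equations $B_{jj}=A_{jj}+T_{jk}A_{kj}$ and $A_{jk}+T_{jk}A_{kk}=B_{jj}T_{jk}$, expand in powers of $t$, and solve the resulting Sylvester equation $\Phi_{0}(T^{jk}_{n})=A_{0}^{jj}T^{jk}_{n}-T^{jk}_{n}A_{0}^{kk}=(\text{lower-order data})$ inductively using the disjointness of the spectra. The only cosmetic difference is that you normalize by $t^{r}$ and phrase the off-diagonal equation as a Riccati equation before expanding, whereas the paper compares coefficients directly.
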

\begin{proof}
	The proof is almost the same as that of the splitting lemma
	in the local theory of differential equations.

	Let us write
	\[
	A(t)=\begin{pmatrix}
		A_{11}(t)&A_{12}(t)\\
		A_{21}(t)&A_{22}(t)
	\end{pmatrix},
	\]
	where $A_{jk}(t)=t^{r}\sum_{i=0}^{\infty}A^{jk}_{i}t^{i}\in
	M(n_{j}\times n_{k},\mathbb{C}(\!(t)\!)),
	\ j,k=1,2.$
	Then the  equation
	\[
		T(t)A(t)=\begin{pmatrix}
			B_{11}(t)&0\\
			0&B_{22}(t)
		\end{pmatrix}	T(t)
	\]
	is equivalent to
	\begin{align*}
	&B_{jj}(t)=A_{jj}(t)+T_{jk}(t)A_{kj}(t)\\
	&A_{jk}(t)+T_{jk}(t)A_{kk}(t)=B_{jj}(t)T_{jk}(t)
	\end{align*}
	for $1\le j\neq k\le 2.$
	Comparing the coefficients of the powers of $t$ on
	both sides, we have
	\begin{align*}
	T_{n}^{jk}A_{0}^{kk}-A_{0}^{jj}T_{n}^{jk}&=
	\sum_{\mu=1}^{n-1}(A^{jj}_{n-\mu}T_{\mu}^{jk}-T_{\mu}^{jk}A_{n-\mu}^{kk})
	\\&\quad -\sum_{\nu=1}^{n-2}T_{\nu}^{jk}\sum_{\mu=1}^{n-\nu-1}A_{n-\mu-\nu}^{kj}T_{\mu}^{jk}+A_{n}^{jk}
	\end{align*}
	for $n\ge 1.$
	Recall that the equation
	\[
	TA_{0}^{22}-A_{0}^{11}T=C
	\]
	for a given $C\in M(n_{1}\times n_{2},\mathbb{C})$ has the unique
	solution $T\in M(n_{1}\times n_{2},\mathbb{C})$ since
	the sets of eigenvalues of
	$A_{0}^{11}$ and $A_{0}^{22}$ respectively are disjoint,
	see Lemma 24 of the section A.1 in \cite{Bal} for example.
	Thus the above equations determine $T_{n}^{jk}$, $n=1,2,\ldots,$
	inductively.
\end{proof}

Since the HTL normal form is multiplicity free,
the decomposition in Theorem \ref{refinedHTL}
\[
		\widetilde{M}\cong \bigoplus_{i=1}^{m}\widetilde{M_{E_{q_{i}(z),R_{i}}}}
\]
is the irreducible decomposition.
Correspondingly, the following proposition shows that
the decomposition in Proposition \ref{Hensel} is the irreducible
decomposition with
the irreducible components
\[
 \prod_{j=0}^{r_{i}-1}
 \left(y-\frac{\tilde{q}_{[i,j]}(z)}{z}\right)\in \mathbb{C}(\!\{z\}\!)[y],\quad i=1,2,\ldots,m.
\]

\begin{prop}\label{Galois}
	We use the same notation as in Proposition \ref{Hensel}.
	The Galois orbit of $\tilde{q}_{[i,j]}(z)\in \mathcal{P}^{\text{conv}}$
	is
	\begin{align*}
	\mathrm{Gal}(\mathcal{P}/\mathbb{C}(\!(z)\!))\cdot \tilde{q}_{[i,j]}(z)&=
	\{\tilde{q}_{[i,0]}(z),\tilde{q}_{[i,1]}(z),\ldots,\tilde{q}_{[i,r_{i}-1]}(z)\}\\
	&=\{\tilde{q}_{[i,0]}(z),\xi(\tilde{q}_{[i,0]})(z),\ldots,\xi^{r_{i}-1}(\tilde{q}_{[i,0]})(z)\}
\end{align*}
	for each $i=1,2,\ldots,m,\,j=0,1,\ldots,r_{i-1}.$
	In particular $\tilde{q}_{[i,j]}(z)\in \mathbb{C}(\!\{z^{\frac{1}{r_{i}}}\}\!).$
\end{prop}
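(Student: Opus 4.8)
The plan is to exploit the compatibility of the polar projection $\pr^{-}$ with the Galois action, so that the already-known polar parts $\xi^{j}q_{i}$ of the roots $\tilde q_{[i,j]}(z)$ dictate how the Galois group permutes them. The first thing I would record is that the generator $\xi$ of $\mathrm{Gal}(\mathcal{P}/\mathbb{C}(\!(z)\!))$ commutes with the projection $\pr^{-}\colon\mathcal{P}\to\mathcal{P}^{-}$. Indeed $\xi$ sends $z^{r}$ to $e^{2\pi i r}z^{r}$, which never alters the sign of the exponent $r$, so $\xi$ preserves each summand of $\mathcal{P}=\mathcal{P}^{-}\oplus\mathcal{P}^{+}$ and hence $\xi\circ\pr^{-}=\pr^{-}\circ\xi$. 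Combined with the identity $\pr^{-}(\tilde q_{[i,j]}(z))=\xi^{j}q_{i}(z)$ from Proposition \ref{Hensel}, this gives $\pr^{-}(\xi(\tilde q_{[i,j]}(z)))=\xi^{j+1}q_{i}(z)$.

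Next I would use that the characteristic polynomial $\det(yI_{n}-G)$ has coefficients in $\mathbb{C}(\!\{z\}\!)\subset\mathbb{C}(\!(z)\!)$, so $\mathrm{Gal}(\mathcal{P}/\mathbb{C}(\!(z)\!))$ permutes its roots. Since $z$ lies in the base field we have $\xi(z)=z$, whence $\xi(\tilde q_{[i,j]}(z))/z=\xi(\tilde q_{[i,j]}(z)/z)$ is again a root and therefore equals some $\tilde q_{[i',j']}(z)/z$. Comparing polar parts yields $\xi^{j+1}q_{i}(z)=\xi^{j'}q_{i'}(z)$. The disjointness of the Galois orbits of the $q_{i}$ built into the definition of an HTL normal form forces $i'=i$, and since the ramification index $r_{i}$ is exactly the period of $q_{i}$ under $\xi$, we get $j'\equiv j+1\pmod{r_{i}}$. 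Thus $\xi$ acts on $\{\tilde q_{[i,0]}(z),\dots,\tilde q_{[i,r_{i}-1]}(z)\}$ as a single $r_{i}$-cycle, which is precisely the asserted orbit description.

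For the membership in $\mathbb{C}(\!\{z^{1/r_{i}}\}\!)$ I would argue by degree. The factor $\prod_{j=0}^{r_{i}-1}(y-\tilde q_{[i,j]}(z)/z)$ is Galois-stable, hence lies in $\mathbb{C}(\!(z)\!)[y]$, and by the previous step its roots form a single transitive orbit, so it is irreducible of degree $r_{i}$ over $\mathbb{C}(\!(z)\!)$. Therefore $\mathbb{C}(\!(z)\!)(\tilde q_{[i,j]}(z))=\mathbb{C}(\!(z)\!)(\tilde q_{[i,j]}(z)/z)$ is a degree-$r_{i}$ extension of $\mathbb{C}(\!(z)\!)$; as $\mathcal{P}/\mathbb{C}(\!(z)\!)$ is procyclic it admits a unique subextension of each degree, namely $\mathbb{C}(\!(z^{1/r_{i}})\!)$, so $\tilde q_{[i,j]}(z)\in\mathbb{C}(\!(z^{1/r_{i}})\!)$. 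Since $\tilde q_{[i,j]}(z)$ is already convergent by Proposition \ref{Hensel}, it lies in $\mathbb{C}(\!\{z^{1/r_{i}}\}\!)$.

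The step I expect to require the most care is the assertion that $r_{i}$ is exactly the period of $q_{i}$, i.e.\ that $\xi^{0}q_{i},\dots,\xi^{r_{i}-1}q_{i}$ are pairwise distinct. This is where the minimality in the definition $r=\min\{s\mid q(z)\in z^{-1/s}\mathbb{C}[z^{-1/s}]\}$ of the ramification index must be invoked: it guarantees that the orbit of $q_{i}$ has full length $r_{i}$ rather than a proper divisor, and hence that the degree count identifying the factor as irreducible of degree $r_{i}$ is sharp. Everything else is a formal manipulation with the splitting $\mathcal{P}=\mathcal{P}^{-}\oplus\mathcal{P}^{+}$ and the Galois correspondence for $\mathcal{P}/\mathbb{C}(\!(z)\!)$.
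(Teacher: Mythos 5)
Your proof is correct and follows essentially the same route as the paper: both arguments rest on the Galois group permuting the roots of $\mathrm{det}(yI_{n}-G)\in\mathbb{C}(\!\{z\}\!)[y]$ together with the compatibility of $\pr^{-}$ with the Galois action, which pins the orbit of $\tilde{q}_{[i,j]}$ down to the preimage of the orbit of $q_{i}$. You merely track the generator $\xi$ step by step where the paper invokes the bijection $\widetilde{\pr}^{-}$ on orbits, and you make explicit the degree/Galois-correspondence argument for $\tilde{q}_{[i,j]}\in\mathbb{C}(\!\{z^{1/r_{i}}\}\!)$ that the paper leaves implicit.
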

\begin{proof}
	The decomposition
	\[
	 \mathrm{det}\left(yI_{n}-G\right)=\prod_{i=1}^{m}\prod_{j=0}^{r_{i}-1}
	 \left(y-\frac{\tilde{q}_{[i,j]}(z)}{z}\right)\in \mathbb{C}(\!\{z\}\!)[y]
	\]
	tells us that
	\[
	\mathrm{Gal}(\mathcal{P}/\mathbb{C}(\!(z)\!))\cdot \tilde{q}_{[i,j]}(z)\subset
	\{\tilde{q}_{[k,l]}(z)\mid k=1,2,\ldots,m,\,l=0,1,\ldots,r_{k}\}.
	\]
	Let $\pr^{-}\colon \mathcal{P}\rightarrow \mathcal{P}^{-}$ be the projection along the decomposition $\mathcal{P}=\mathcal{P}^{-}\oplus
	\mathcal{P}^{+}$.
	Since $\pr^{-}$ is compatible with the Galois action,
	we have
	\begin{align*}
	\pr^{-}\left(\mathrm{Gal}(\mathcal{P}/\mathbb{C}(\!(z)\!))\cdot \tilde{q}_{[i,j]}(z)\right)&=\mathrm{Gal}(\mathcal{P}/\mathbb{C}(\!(z)\!))\cdot \xi^{j}(q_{i})(z)\\
	&=\mathrm{Gal}(\mathcal{P}/\mathbb{C}(\!(z)\!))\cdot q_{i}(z).
\end{align*}
The restriction
\[
\widetilde{\pr}^{-}:=\pr^{-}|_{\{\tilde{q}_{[k,l]}(z)\mid k=1,2,\ldots,m,\,l=0,1,\ldots,r_{k}\}}
\]
is a bijection onto $\{\xi^{l}(q_{k})(z)\mid k=1,2,\ldots,m,\,l=0,1,\ldots,r_{k}\}$.
Thus we have
	\begin{align*}
	\mathrm{Gal}(\mathcal{P}/\mathbb{C}(\!(z)\!))\cdot \tilde{q}_{[i,j]}(z)&=
	(\widetilde{pr}^{-})^{-1}\left(\mathrm{Gal}(\mathcal{P}/\mathbb{C}(\!(z)\!))\cdot q_{i}(z)\right)\\
	&=
	\{\tilde{q}_{[i,0]}(z),\tilde{q}_{[i,1]}(z),\ldots,\tilde{q}_{[i,r_{i}-1]}(z)\}\\
	&=\{\tilde{q}_{[i,0]}(z),\xi(\tilde{q}_{[i,0]})(z),\ldots,\xi^{r_{i}-1}(\tilde{q}_{[i,0]})(z)\}.
\end{align*}
\end{proof}
\subsection{Milnor formula}

By Proposition \ref{Galois} , the decomposition in Proposition \ref{Hensel} can be rewritten as follows,
\begin{align*}
	z^{n\nu}\mathrm{det}\left(yI_{n}-G\right)&=z^{n\nu}\prod_{i=1}^{m}\prod_{j=0}^{r_{i}-1}
 \left(y-\frac{\xi^{j}(\tilde{q}_{i})(z)}{z}\right)\\
 &=\prod_{i=1}^{m}\prod_{j=0}^{r_{i}-1}z^{\nu_{i}}
 \left(y-\frac{\xi^{j}(\tilde{q}_{i})(z)}{z}\right),
\end{align*}
where $\tilde{q}_{i}(z)\in \mathbb{C}(\!\{z^{\frac{1}{r_{i}}}\}\!)$ satisfy
$\pr^{-}(\tilde{q}_{i}(z))=q_{i}(z)$
and we set 
$
	\nu_{i}:=r_{i}\cdot\mathrm{max}\left\{0,-\mathrm{ord\,}\frac{q_{i}(z)}{z}\right\}	
$
for $i=1,2,\ldots,m.$
Moreover this is the irreducible decomposition with the irreducible components
\[
z^{\nu_{i}}\prod_{j=0}^{r_{i}-1}
\left(y-\frac{\xi^{j}(\tilde{q}_{i})(z)}{z}\right)\in \mathbb{C}\{z\}[y].
\]

We now investigate the singularity of the zero locus of each irreducible components
at $(y,z)=(\infty,0)$.
To be more precise, let us put $y=\frac{\eta}{\zeta}$
and consider the homogenized polynomial
\[
z^{\nu_{i}}\prod_{j=0}^{r_{i}-1}
\left(\eta-\frac{\xi^{j}(\tilde{q}_{i})(z)}{z}\zeta\right).
\]
The restriction to $\eta=1$ gives
\[
z^{\nu_{i}}\prod_{j=0}^{r_{i}-1}
\left(1-\frac{\xi^{j}(\tilde{q}_{i})(z)}{z}\zeta\right)=
z^{\nu_{i}}\prod_{j=0}^{r_{i}-1}\left(-\frac{\xi^{j}(\tilde{q}_{i})(z)}{z}\right)\cdot
\prod_{j=0}^{r_{i}-1}
\left(\zeta-\frac{z}{\xi^{j}(\tilde{q}_{i})(z)}\right).
\]
Suppose that $\mathrm{ord\,}\frac{\tilde{q_{i}}(z)}{z}<0$.
Then  $$z^{\nu_{i}}\prod_{j=0}^{r_{i}-1}\left(-\frac{\xi^{j}(\tilde{q}_{i})(z)}{z}\right)|_{z=0}\neq 0\quad
\text{and}\quad  \frac{z}{\xi^{j}(\tilde{q}_{i})(z)}\in \mathbb{C}(\!\{z^{\frac{1}{r_{i}}}\}\!)\cap
\mathbb{C}[\![z^{\frac{1}{r_{i}}}]\!],$$
and the  zero locus of
\[
\prod_{j=0}^{r_{i}-1}
\left(\zeta-\frac{z}{\xi^{j}(\tilde{q}_{i})(z)}\right)
\]
defines the plane curve germ $C_{\tilde{q_{i}}}$  at $(\zeta,z)=(0,0)$ where $\zeta:=\frac{1}{y}$.

Let us set 
\[
	-\frac{p_{i}}{r_{i}}:=\mathrm{ord\ }q_{i}(z)	 
\] 
with relatively prime integers $p_{i}$ and $r_{i}$ for $i=1,2,\ldots,m.$
\begin{prop}\label{intersection}
	Let us fix $i\neq j\in\{1,2,\ldots,m\}$ such that 
	$\mathrm{ord\,}\frac{\tilde{q_{i}}(z)}{z}<0$ and $\mathrm{ord\,}\frac{\tilde{q_{j}}(z)}{z}<0$.
	Then the intersection number of $C_{\tilde{q_{i}}}$ and
	$C_{\tilde{q_{j}}}$ is
	\[
	(C_{\tilde{q_{i}}},C_{\tilde{q_{j}}})=
	p_{i}r_{j}+p_{j}r_{i}+r_{i}r_{j}-
	\mathrm{Irr}(\mathrm{Hom}_{\mathbb{C}(\!\{z\}\!)}(M_{E_{q_{i},R_{i}}},M_{E_{q_{j},R_{j}}})).
	\]
\end{prop}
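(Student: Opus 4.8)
The plan is to compute the intersection number as the $z$-adic order of a resultant and then to match the resulting sum of orders against the Komatsu--Malgrange irregularity of the $\mathrm{Hom}$-module. First I would record the defining equations. Writing $\alpha_{a}(z):=z/\xi^{a}(\tilde{q}_{i})(z)$ for $a=0,\dots,r_{i}-1$ and $\beta_{b}(z):=z/\xi^{b}(\tilde{q}_{j})(z)$ for $b=0,\dots,r_{j}-1$, the germs $C_{\tilde{q_{i}}}$ and $C_{\tilde{q_{j}}}$ are the zero loci of the monic polynomials $f_{i}=\prod_{a}(\zeta-\alpha_{a})$ and $f_{j}=\prod_{b}(\zeta-\beta_{b})$ in $\mathbb{C}\{z\}[\zeta]$; these are Galois-invariant by Proposition \ref{Galois}, and since $\mathrm{ord}_{z}\alpha_{a}=1+p_{i}/r_{i}>0$ (and similarly for $\beta_{b}$) they are genuine Weierstrass polynomials. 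As $i\neq j$ the two germs share no branch, so by the standard resultant description of the intersection multiplicity,
\[
(C_{\tilde{q_{i}}},C_{\tilde{q_{j}}})=\mathrm{ord}_{z}\,\mathrm{Res}_{\zeta}(f_{i},f_{j})=\mathrm{ord}_{z}\prod_{a=0}^{r_{i}-1}\prod_{b=0}^{r_{j}-1}\bigl(\alpha_{a}(z)-\beta_{b}(z)\bigr),
\]
where the right-hand product lies in $\mathbb{C}\{z\}$ even though each factor is only a convergent Puiseux series.

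Next I would compute the order of each factor. From $\alpha_{a}-\beta_{b}=z\,(\xi^{b}(\tilde{q}_{j})-\xi^{a}(\tilde{q}_{i}))/(\xi^{a}(\tilde{q}_{i})\,\xi^{b}(\tilde{q}_{j}))$ together with $\mathrm{ord}_{z}\xi^{a}(\tilde{q}_{i})=\mathrm{ord}_{z}q_{i}=-p_{i}/r_{i}$ and $\mathrm{ord}_{z}\xi^{b}(\tilde{q}_{j})=-p_{j}/r_{j}$, one gets
\[
\mathrm{ord}_{z}(\alpha_{a}-\beta_{b})=1+\frac{p_{i}}{r_{i}}+\frac{p_{j}}{r_{j}}+\mathrm{ord}_{z}\bigl(\xi^{b}(\tilde{q}_{j})-\xi^{a}(\tilde{q}_{i})\bigr).
\]
The crucial point is the last term. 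Decomposing $\tilde{q}_{i}=q_{i}+\tilde{q}_{i}^{+}$ and $\tilde{q}_{j}=q_{j}+\tilde{q}_{j}^{+}$ with $q_{\bullet}\in\mathcal{P}^{-}$ and $\tilde{q}_{\bullet}^{+}\in\mathcal{P}^{+}$, the difference splits into its $\mathcal{P}^{-}$ and $\mathcal{P}^{+}$ parts as $(\xi^{b}(q_{j})-\xi^{a}(q_{i}))+(\xi^{b}(\tilde{q}_{j}^{+})-\xi^{a}(\tilde{q}_{i}^{+}))$. Because the HTL normal form forces the orbits $\mathrm{Gal}(\mathcal{P}/\mathbb{C}(\!(z)\!))\cdot q_{i}$ and $\mathrm{Gal}(\mathcal{P}/\mathbb{C}(\!(z)\!))\cdot q_{j}$ to be disjoint, the polar part $\xi^{b}(q_{j})-\xi^{a}(q_{i})$ is a nonzero element of $\mathcal{P}^{-}$ for every pair $(a,b)$; hence $\mathrm{ord}_{z}(\xi^{b}(\tilde{q}_{j})-\xi^{a}(\tilde{q}_{i}))=\mathrm{ord}_{z}(\xi^{b}(q_{j})-\xi^{a}(q_{i}))<0$ for all $a,b$. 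Summing over the $r_{i}r_{j}$ pairs yields
\[
(C_{\tilde{q_{i}}},C_{\tilde{q_{j}}})=r_{i}r_{j}+p_{i}r_{j}+p_{j}r_{i}+\sum_{a,b}\mathrm{ord}_{z}\bigl(\xi^{b}(q_{j})-\xi^{a}(q_{i})\bigr).
\]

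Finally I would identify the remaining sum with the irregularity. The formalization of $\mathrm{Hom}_{\mathbb{C}(\!\{z\}\!)}(M_{E_{q_{i},R_{i}}},M_{E_{q_{j},R_{j}}})\cong M_{E_{q_{i},R_{i}}}^{*}\otimes M_{E_{q_{j},R_{j}}}$ has, in the multiplicity free case, HTL exponential data exactly the projections $\mathrm{pr}^{-}(\xi^{b}(q_{j})-\xi^{a}(q_{i}))$, one for each pair $(a,b)$; by the disjointness of the Galois orbits these projections equal $\xi^{b}(q_{j})-\xi^{a}(q_{i})$ and never vanish, so the irregularity formula $\mathrm{Irr}(M)=-\sum_{i}r_{i}\,\mathrm{ord}\,q_{i}(z)$ recalled above gives $\mathrm{Irr}(\mathrm{Hom})=-\sum_{a,b}\mathrm{ord}_{z}(\xi^{b}(q_{j})-\xi^{a}(q_{i}))$. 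Substituting this into the last display produces the asserted identity. I expect the main obstacle to be precisely this identification: one must check that passing from the naive tensor-product exponents $\xi^{b}(q_{j})-\xi^{a}(q_{i})$ to the genuine HTL normal form of the $\mathrm{Hom}$-module introduces no correction, which is exactly what the disjointness of the two Galois orbits (making $\mathrm{pr}^{-}$ act as the identity on each difference) guarantees; a secondary technical point is justifying the resultant formula for the intersection multiplicity in the convergent-Puiseux setting.
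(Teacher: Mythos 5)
Your proof is correct and takes essentially the same route as the paper: the paper computes $(C_{\tilde{q_{i}}},C_{\tilde{q_{j}}})$ by substituting the parametrization $z=t^{r_{i}}$, $\zeta=t^{r_{i}}/\tilde{q_{i}}(t^{r_{i}})$ of the first branch into the defining equation of the second, which, after using Galois invariance of $\mathrm{ord}_{z}$, is exactly your resultant expression $\sum_{a,b}\mathrm{ord}_{z}(\alpha_{a}-\beta_{b})$. The remaining bookkeeping --- extracting $r_{i}r_{j}+p_{i}r_{j}+p_{j}r_{i}$ from the orders of $\tilde{q_{i}}$, $\xi^{k}(\tilde{q_{j}})$ and identifying $-\sum_{a,b}\mathrm{ord}_{z}(\xi^{b}(q_{j})-\xi^{a}(q_{i}))$ with $\mathrm{Irr}(\mathrm{Hom}_{\mathbb{C}(\!\{z\}\!)}(M_{E_{q_{i},R_{i}}},M_{E_{q_{j},R_{j}}}))$ via the disjointness of the Galois orbits --- is the same in both arguments.
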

\begin{proof}
Since the plane curve germ $C_{\tilde{q_{i}}}$ is parametrized by
$z(t)=t^{r_{i}}, \zeta(t)=\frac{t^{r_{i}}}{\tilde{q_{i}}(t^{r_{i}})}$ and
the germ $C_{\tilde{q_{j}}}$ is defined by $\prod_{k=1}^{r_{j}}\left(
	\zeta-\frac{z}{\xi^{k}(\tilde{q_{j}})(z)}
\right)=0$.
Then the  intersection number of them is computed as follows,
see 1.2 in \cite{Wal} for example.
\begin{align*}
	(C_{\tilde{q_{i}}},C_{\tilde{q_{j}}})&=\mathrm{ord\,}_{t}
	\prod_{k=1}^{r_{j}}
		\left(\zeta(t)-\frac{t^{r_{i}}}{\xi^{k}(\tilde{q_{j}})(t^{r_{i}})}\right)\\
		&=r_{i}\mathrm{ord\,}_{z}\prod_{k=1}^{r_{j}}
		\left(\frac{z}{\tilde{q_{i}}(z)}-\frac{z}{\xi^{k}(\tilde{q_{j}})(z)}\right)\\
		&=r_{i}\mathrm{ord\,}_{z}\prod_{k=1}^{r_{j}}
		\left(\frac{1}{\tilde{q_{i}}(z)}-\frac{1}{\xi^{k}(\tilde{q_{j}})(z)}\right)+r_{i}r_{j}\\
		&=r_{i}\mathrm{ord\,}_{z}\prod_{k=1}^{r_{j}}
		\left(\frac{\xi^{k}(\tilde{q_{j}})(z)-\tilde{q_{i}}(z)}{\tilde{q_{i}}(z)\xi^{k}(\tilde{q_{j}})(z)}\right)+r_{i}r_{j}\\
		&=r_{i}\mathrm{ord\,}_{z}\prod_{k=1}^{r_{j}}
		\left(\xi^{k}(\tilde{q_{j}})(z)-\tilde{q_{i}}(z)\right)+p_{i}r_{j}+p_{j}r_{i}+r_{i}r_{j}\\
		&=p_{i}r_{j}+p_{j}r_{i}+r_{i}r_{j}-
		\mathrm{Irr}(\mathrm{Hom}_{\mathbb{C}(\!\{z\}\!)}(M_{E_{q_{i},R_{i}}},M_{E_{q_{j},R_{j}}})).
\end{align*}
\end{proof}
\begin{prop}\label{milnor}
	Let us fix an $i\in \{1,2,\ldots,m\}$ and 
	suppose that $\mathrm{ord\,}\frac{\tilde{q_{i}}(z)}{z}<0$.
	Then the Milnor number of the germ $C_{\tilde{q}_{i}}$  is
	\[
	\mu(C_{\tilde{q_{i}}})=(2p_{i}+r_{i}-1)(r_{i}-1)
	-\mathrm{Irr}\left(\mathrm{End}_{\mathbb{C}(\!\{z\}\!)}(M_{E_{q_{i},R_{i}}})
	\right).
	\]
\end{prop}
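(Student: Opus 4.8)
The plan is to identify the equisingularity type of the irreducible germ $C_{\tilde q_i}$, read off its Milnor number from a single characteristic pair, and then match the result against $\mathrm{Irr}\bigl(\mathrm{End}(M_{E_{q_i,R_i}})\bigr)$ via the Komatsu--Malgrange irregularity formula. First I would record that $C_{\tilde q_i}$ is a single branch: by Proposition \ref{Galois} the factors $\zeta-z/\xi^{j}(\tilde q_i)(z)$, $j=0,\dots,r_i-1$, form one Galois orbit, so the product $\prod_{j=0}^{r_i-1}\bigl(\zeta-z/\xi^{j}(\tilde q_i)(z)\bigr)$ is irreducible over $\mathbb{C}(\!\{z\}\!)$. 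Hence $r=1$ and the Milnor--delta relation $\mu=2\delta-r+1$ reduces to $\mu(C_{\tilde q_i})=2\delta(C_{\tilde q_i})$. The branch is normalized by $t\mapsto(z,\zeta)=(t^{r_i},\,t^{r_i}/\tilde q_i(t^{r_i}))$. Since $\mathrm{ord}_z\tilde q_i(z)=\mathrm{ord}_z q_i(z)=-p_i/r_i$ (the part $\tilde q_i-q_i=\mathrm{pr}^{+}(\tilde q_i)$ has non-negative order), I obtain $\mathrm{ord}_t z=r_i$ and $\mathrm{ord}_t\zeta=r_i+p_i$.

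Next I would pin down the characteristic exponents of this branch. Because $p_i$ and $r_i$ are coprime, the leading exponent $(r_i+p_i)/r_i$ of the Puiseux series $\zeta(z)$ already has denominator exactly $r_i$; thus $\gcd(r_i,r_i+p_i)=1$ forces the characteristic $\gcd$ down to $1$ at the first step, and the branch carries the single characteristic pair $(r_i,\,r_i+p_i)$. Equivalently, its value semigroup is $\langle r_i,\,r_i+p_i\rangle$, so $C_{\tilde q_i}$ is equisingular to the monomial curve $\zeta^{r_i}-z^{r_i+p_i}=0$ (see 1.2 in \cite{Wal}). The number of gaps of $\langle r_i,r_i+p_i\rangle$ is $(r_i-1)(r_i+p_i-1)/2$, so $\delta(C_{\tilde q_i})=(r_i-1)(r_i+p_i-1)/2$ and therefore
\[
\mu(C_{\tilde q_i})=(r_i-1)(r_i+p_i-1).
\]

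It remains to evaluate the irregularity. Writing $\mathrm{End}(M_{E_{q_i,R_i}})\cong M_{E_{q_i,R_i}}^{*}\otimes M_{E_{q_i,R_i}}$, its formalization splits over $\mathcal{P}$ into rank-one pieces whose exponential parts are $\xi^{a}(q_i)(z)-\xi^{b}(q_i)(z)$ for $0\le a,b\le r_i-1$. Setting $\omega=e^{2\pi i/r_i}$, the leading coefficient of $\xi^{a}(q_i)-\xi^{b}(q_i)$ is proportional to $\omega^{-ap_i}-\omega^{-bp_i}$; coprimality of $p_i,r_i$ makes $a\mapsto\omega^{-ap_i}$ a bijection on $\mu_{r_i}$, so this coefficient vanishes exactly when $a=b$, and for $a\ne b$ one has $\mathrm{ord}(\xi^{a}(q_i)-\xi^{b}(q_i))=-p_i/r_i$. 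The irregularity formula $\mathrm{Irr}(N)=-\sum\mathrm{ord}$ over the exponential factors recalled above then gives
\[
\mathrm{Irr}\bigl(\mathrm{End}_{\mathbb{C}(\!\{z\}\!)}(M_{E_{q_i,R_i}})\bigr)=-\sum_{a\ne b}\mathrm{ord}(\xi^{a}(q_i)-\xi^{b}(q_i))=\frac{p_i}{r_i}\cdot r_i(r_i-1)=p_i(r_i-1).
\]
Substituting both computations into the claimed identity yields $(2p_i+r_i-1)(r_i-1)-p_i(r_i-1)=(r_i-1)(r_i+p_i-1)=\mu(C_{\tilde q_i})$, which is exactly the assertion (the degenerate case $r_i=1$ gives $0=0$ on both sides).

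The main obstacle I anticipate is the middle step: justifying cleanly that the germ has only one characteristic pair, so that its semigroup is two-generated and the equisingularity type is monomial. This is precisely where the hypothesis $\gcd(p_i,r_i)=1$ is essential—one must confirm that the ramification index $r_i$ of the cell coincides with the denominator of $\mathrm{ord}\,q_i$, so that the leading term of $\zeta$ already carries the full denominator $r_i$ and no further characteristic exponents can appear. By contrast, the irregularity computation is a direct application of the recalled formula once the bijectivity of $a\mapsto\omega^{-ap_i}$ is observed, and the final algebra is routine.
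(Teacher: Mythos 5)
Your argument is correct and reaches the stated identity, but by a genuinely different route from the paper. The paper computes $\mu(C_{\tilde q_i})$ via the polar-intersection formula $\mu=(F_i,\partial_\zeta F_i)+1-(F_i,z)$ and evaluates $(F_i,\partial_\zeta F_i)$ on the parametrization; the discriminant-type product $\prod_{l\neq k}\bigl(\xi^{l}(\tilde q_i)(z)-\xi^{k}(\tilde q_i)(z)\bigr)$ then appears, and its $z$-order is \emph{by definition} $-\mathrm{Irr}\bigl(\mathrm{End}_{\mathbb{C}(\!\{z\}\!)}(M_{E_{q_i,R_i}})\bigr)$, so the irregularity is never evaluated in closed form. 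You instead compute both sides explicitly: $\mu=2\delta=(r_i-1)(r_i+p_i-1)$ from the single characteristic pair $(r_i,r_i+p_i)$ and the semigroup $\langle r_i,r_i+p_i\rangle$, and $\mathrm{Irr}=p_i(r_i-1)$ from the leading coefficients $\omega^{-ap_i}-\omega^{-bp_i}$, then check the algebra. This buys more information (closed forms for each invariant separately) at the cost of leaning harder on coprimality.

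The one point to watch is exactly the one you flag: your proof needs the denominator of $\mathrm{ord}\,q_i$ in lowest terms to equal the ramification index $r_i$ of the cell, both for the single-characteristic-pair claim and for the injectivity of $a\mapsto\omega^{-ap_i}$ on $\mathbb{Z}/r_i\mathbb{Z}$. This is \emph{not} automatic: e.g.\ $q_i=z^{-1/2}+z^{-1/4}$ has ramification index $4$ but $\mathrm{ord}\,q_i=-1/2$, and there the branch acquires a second characteristic exponent, so your semigroup computation breaks down. The paper's statement builds this in by writing $-p_i/r_i=\mathrm{ord}\,q_i$ with $p_i,r_i$ coprime while also using $r_i$ for the ramification index, so under the paper's (implicit) convention your proof is complete; but note that the paper's polar-intersection argument is insensitive to this issue, since it only uses $\mathrm{ord}\,q_i$ as a rational number together with the distinctness of the conjugates $\xi^{a}(q_i)$, and would survive in the non-coprime case where your route does not.
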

\begin{proof}
	Set $F_{i}(\zeta,z):=\prod_{k=1}^{r_{i}}\left(
		\zeta-\frac{z}{\xi^{k}(\tilde{q_{i}})(z)}
	\right)$. Then the germ $C_{\tilde{q_{i}}}$ is defined by $F_{i}=0$.
	Then the Milnor number can be obtained by
	\[
		\mu(C_{\tilde{q_{i}}})=(F_{i},\frac{\partial}{\partial \zeta}F_{i})+1
		-(F_{i},z).
	\]
	We refer to COROLLARY 7.16 and THEOREM 7.18 in \cite{Hef} for this fact.

	If we note that $\displaystyle \frac{\partial}{\partial\zeta}F_{i}=\sum_{k=1}^{r_{i}}
	\prod_{\substack{1\le l\le r_{i}\\l\neq k}}(\zeta-\frac{z}{\xi^{l}(\tilde{q_{i}})(z)}),$
	then
	\begin{align*}
		&(F_{i},\frac{\partial}{\partial \zeta}F_{i})=\mathrm{ord\,}_{z}\prod_{\substack{1\le l,k\le r_{i}\\l\neq k}}
		\left(\frac{z}{\xi^{k}(\tilde{q_{i}})(z)}-\frac{z}{\xi^{l}(\tilde{q_{i}})(z)}\right)\\
		&\quad\quad =\mathrm{ord\,}_{z}\prod_{\substack{1\le l,k\le r_{i}\\l\neq k}}
		\left(\xi^{l}(\tilde{q_{i}})(z)-\xi^{k}(\tilde{q_{i}})(z)\right)+r_{i}(r_{i}-1)+2p_{i}(r_{i}-1)\\
		&\quad\quad =-\mathrm{Irr}\left(\mathrm{End}_{\mathbb{C}(\!\{z\}\!)}(M_{E_{q_{i},R_{i}}})
		\right)+r_{i}(r_{i}-1)+2p_{i}(r_{i}-1).
	\end{align*}
	Also we have $(F_{j},z)=r_{i}$.
	Thus combining these equations, we have
	\begin{align*}
	\mu(C_{\tilde{q_{i}}})&=(2p_{i}+r_{i})(r_{i}-1)-\mathrm{Irr}\left(\mathrm{End}_{\mathbb{C}(\!\{z\}\!)}(M_{E_{q_{i},R_{i}}})
	\right)+1-r_{i}\\
	&=(2p_{i}+r_{i}-1)(r_{i}-1)-\mathrm{Irr}\left(\mathrm{End}_{\mathbb{C}(\!\{z\}\!)}(M_{E_{q_{i},R_{i}}})
	\right).
	\end{align*}
\end{proof}

Now we compute the Milnor number of the zero locus of the characteristic polynomial
$z^{n\nu}\mathrm{det}(yI_{n}-G)$ at $(y,z)=(\infty,0)$ as follows.
Let us
suppose that $M$ has a singularity at $z=0$.
Then $G$ has a pole at $z=0$ and the zero locus of the homogenization
of the characteristic polynomial
\[
	z^{n\nu}\zeta^{n}\mathrm{det}\left(\frac{\eta}{\zeta}I_{n}-G\right)
\]
pass through the point $([\zeta:\eta],z)=([0:1],0)$.
Let us denote the zero locus  by $C_{G}$ and  the Milnor number of $C_{G}$ at $([\zeta:\eta],z)=([0:1],0)$
by $\mu(C_{G})_{\infty}$.

\begin{thm}[Milnor formula]\label{premilnor}
	Let us take a differential module $M$ over $\mathbb{C}(\!\{z\}\!)$
	of rank $n$
	and a matrix $G$ of $M$ as in Proposition \ref{Hensel}.
	Suppose that $M$ has a singularity at $z=0$.
	Then the Milnor number of $C_{G}$ at $([\zeta:\eta],z)=([0:1],0)$ is
	\[
		\mu(C_{G})_{\infty}=-n^{2}-\mathrm{Irr}(\mathrm{End}_{\mathbb{C}(\!\{z\}\!)}(M))+2(n-1)(C_{G},\zeta)_{\infty}+(m-r_{C_{G}})+1.
	\]
	Here $r_{C_{G}}$ is the number of branches of the germ of $C_{G}$ at $([\zeta:\eta],z)=([0:1],0)$ and
	$(C_{G},\zeta)_{\infty}$ is the intersection number of $C_{G}$ and
	$\zeta$ at $([\zeta:\eta],z)=([0:1],0)$.
\end{thm}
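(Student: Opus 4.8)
The plan is to reduce the computation of $\mu(C_G)_{\infty}$ to the local data already extracted in Propositions \ref{milnor} and \ref{intersection} by decomposing the germ of $C_G$ at $([\zeta:\eta],z)=([0:1],0)$ into its branches. Recall from Propositions \ref{Hensel} and \ref{Galois} that over $\mathbb{C}(\!\{z\}\!)$ the characteristic polynomial factors into irreducible components $z^{\nu_{i}}\prod_{j=0}^{r_{i}-1}\left(y-\frac{\xi^{j}(\tilde{q}_{i})(z)}{z}\right)$, each of which is a single Galois orbit and hence defines a single branch $C_{\tilde{q}_{i}}$. Only the components with $\mathrm{ord}(\tilde{q}_{i}/z)<0$ pass through the point at infinity; writing $I$ for this set of indices, the number of branches of $C_G$ there is $r_{C_G}=|I|$. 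The starting point is the standard additivity of the Milnor number over branches of a plane curve germ,
\[
\mu(C_G)_{\infty}=\sum_{i\in I}\mu(C_{\tilde{q}_{i}})+2\sum_{\substack{i,j\in I\\ i<j}}(C_{\tilde{q}_{i}},C_{\tilde{q}_{j}})-r_{C_G}+1,
\]
which follows from $\mu=2\delta-r+1$ together with the additivity $\delta(C)=\sum_{i}\delta(C_{i})+\sum_{i<j}(C_{i},C_{j})$ of the $\delta$-invariant (see \cite{Hef}); here each $C_{\tilde{q}_{i}}$ is a single branch, so $\mu(C_{\tilde{q}_{i}})=2\delta(C_{\tilde{q}_{i}})$.

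Next I would substitute the two local formulas already proved. Inserting Proposition \ref{milnor} for each $\mu(C_{\tilde{q}_{i}})$ and Proposition \ref{intersection} for each $(C_{\tilde{q}_{i}},C_{\tilde{q}_{j}})$ splits the right-hand side into a purely combinatorial part in the integers $p_{i},r_{i}$ ($i\in I$) and an irregularity part. Setting $N:=\sum_{i\in I}r_{i}$ and $P:=\sum_{i\in I}p_{i}$, a direct expansion of $\sum_{i\in I}(2p_{i}+r_{i}-1)(r_{i}-1)$ and of $2\sum_{i<j}(p_{i}r_{j}+p_{j}r_{i}+r_{i}r_{j})$ collapses, after the $-r_{C_G}+1$ correction, to $N^{2}+2PN-2N-2P+1$. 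For the irregularity part, the decomposition $\widetilde{M}\cong\bigoplus_{i}\widetilde{M_{E_{q_{i},R_{i}}}}$ of Theorem \ref{refinedHTL} yields $\mathrm{End}(M)=\bigoplus_{i,j}\mathrm{Hom}(M_{E_{q_{i},R_{i}}},M_{E_{q_{j},R_{j}}})$, so that additivity of the Komatsu-Malgrange irregularity together with the symmetry $\mathrm{Irr}(\mathrm{Hom}(M_{i},M_{j}))=\mathrm{Irr}(\mathrm{Hom}(M_{j},M_{i}))$ assembles the terms $-\sum_{i\in I}\mathrm{Irr}(\mathrm{End}(M_{i}))-2\sum_{i<j}\mathrm{Irr}(\mathrm{Hom}(M_{i},M_{j}))$ into $-\mathrm{Irr}(\mathrm{End}(M_{I}))$, where $M_{I}:=\bigoplus_{i\in I}M_{E_{q_{i},R_{i}}}$.

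It then remains to compare $N$, $P$, $\mathrm{Irr}(\mathrm{End}(M_{I}))$ and $r_{C_G}$ with the global quantities $n$, $(C_G,\zeta)_{\infty}$, $\mathrm{Irr}(\mathrm{End}(M))$ and $m$ in the statement. The intersection with $\zeta=0$ is computed branch by branch: parametrizing $C_{\tilde{q}_{i}}$ by $z=t^{r_{i}}$, $\zeta=t^{r_{i}}/\tilde{q}_{i}(t^{r_{i}})$ gives $(C_{\tilde{q}_{i}},\zeta)_{\infty}=p_{i}+r_{i}$, hence $(C_G,\zeta)_{\infty}=P+N$. The rest of the bookkeeping is governed by the observation that the only component which can fail to pass through infinity is a single regular-singular cell of rank one, since every irregular cell has $\mathrm{ord}\,\tilde{q}_{i}=\mathrm{ord}\,q_{i}<0$ and so lies in $I$. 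Thus $\epsilon:=m-r_{C_G}\in\{0,1\}$ and $n=N+\epsilon$, while tensoring by a rank-one regular module leaves irregularity unchanged, giving $\mathrm{Irr}(\mathrm{End}(M))=\mathrm{Irr}(\mathrm{End}(M_{I}))+2P\epsilon$. Substituting these into the target formula and expanding, every term matches except a residual constant $-\epsilon^{2}+\epsilon+1$, which equals $1$ precisely because $\epsilon\in\{0,1\}$; this closes the identity.

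The step I expect to be the main obstacle is exactly this final reconciliation. The Milnor number at infinity is intrinsically local and sees only the branches through $([0:1],0)$, whereas $\mathrm{Irr}(\mathrm{End}(M))$ and $m$ are global over all cells of the HTL normal form. Tracking the contribution of the possibly-present regular-singular cell $M_{i_{0}}$ --- both its absence from $I$ and its cross terms $\mathrm{Hom}(M_{i_{0}},M_{i})$, whose irregularity is $\mathrm{Irr}(M_{i})=p_{i}$ --- and checking that these are absorbed exactly by the correction $m-r_{C_G}$ is the delicate point; it is the multiplicity-free hypothesis, forcing that cell to have rank one, that makes $\epsilon\le 1$ and hence makes the constant term close.
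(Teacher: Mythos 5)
Your proposal is correct and follows essentially the same route as the paper: decompose the germ at infinity into the branches $C_{\tilde q_i}$ supplied by Propositions \ref{Hensel} and \ref{Galois}, apply the additivity $\mu=\sum_i\mu_i+2\sum_{i<j}(C_i,C_j)-r+1$, substitute Propositions \ref{milnor} and \ref{intersection}, and reassemble the irregularities, with the regular-singular rank-one cell accounting for the term $m-r_{C_G}$. The only difference is organizational: the paper splits into the two cases $m=r_{C_G}$ and $m=r_{C_G}+1$ and computes each explicitly, whereas you unify them via $\epsilon=m-r_{C_G}\in\{0,1\}$ and the single identity $-\epsilon^{2}+\epsilon+1=1$, which checks out.
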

\begin{proof}
	First we assume that  $\mathrm{ord\,}\frac{\tilde{q_{i}}(z)}{z}<0$ for all $i=1,2,\ldots,m$.
	Then the decomposition in Proposition \ref{Hensel} shows that the homogenized
	characteristic polynomial
	\begin{align*}
		z^{n\nu}\zeta^{n}\mathrm{det}\left(\frac{\eta}{\zeta}I_{n}-G\right)&=z^{n\nu}\prod_{i=1}^{m}\left(
			\prod_{j=0}^{r_{i}-1}\left(-\frac{\xi^{j}(\tilde{q}_{i})(z)}{z}\right)\cdot
	\prod_{j=0}^{r_{i}-1}
	\left(\zeta-\frac{z}{\xi^{j}(\tilde{q}_{i})(z)}\eta\right)
		\right)
	\end{align*}
	defines a reduced plane curve germ $C_{G}$ at $([\zeta:\eta],z)=([0:1],0)$
	with branches $C_{\tilde{q}_{i}}$, $i=1,2,\ldots,m$.
	Then by Propositions \ref{intersection} and \ref{milnor},
	the Milnor number of $C_{G}$ is
	\begin{align*}
		\mu(C_{G})_{\infty}&=\sum_{i=1}^{m}\mu(C_{\tilde{q_{i}}})+2\sum_{1\le j<k\le m}(C_{\tilde{q_{j}}},C_{\tilde{q_{k}}})
		-r_{C_{G}}+1\\
		&=-\sum_{1\le i,j\le m}\mathrm{Irr}(\mathrm{Hom}_{\mathbb{C}(\!\{z\}\!)}(M_{E_{q_{i}(z),R_{i}}},M_{E_{q_{j}(z),R_{j}}}))\\
		&\quad+\sum_{i=1}^{m}(2p_{i}+r_{i}-1)(r_{i}-1)
		+2\sum_{1\le j<k\le m}(p_{j}r_{k}+p_{k}r_{j}
	+r_{j}r_{k})\\	&\quad -m+1\\
		&=-\mathrm{Irr}(\mathrm{End}_{\mathbb{C}(\!\{z\}\!)}(M))+\sum_{i=1}^{m}(2p_{i}r_{i}+r_{i}r_{i}-2(p_{i}+r_{i}))+m\\
		&\quad +2\sum_{1\le j<k\le m}(p_{j}r_{k}+p_{k}r_{j}+r_{j}r_{k})-m+1\\
	\end{align*}
	\begin{align*}
		&=-\mathrm{Irr}(\mathrm{End}_{\mathbb{C}(\!\{z\}\!)}(M))+2\sum_{i=1}^{m}p_{i}\sum_{i=1}^{m}r_{i}
		+\sum_{i=1}^{m}r_{i}\sum_{i=1}^{m}r_{i}\\
		&\quad -2\sum_{i=1}^{m}(p_{i}+r_{i})+1.
	\end{align*}
	Now let us note that $n=\sum_{i=1}^{m}r_{i}$ and
	\begin{align*}
		(C_{G},\zeta)&=\sum_{i=1}^{m}(F_{i},\zeta)
		=\sum_{i=1}^{m}\mathrm{ord\ }_{t}\frac{t^{r_{i}}}{\tilde{q_{i}}(t^{r_{i}})}\\
		&=\sum_{i=1}^{m}(p_{i}+r_{i}).
	\end{align*}
	Then we have
	\begin{align*}
		\mu(C_{G})_{\infty}&=-\mathrm{Irr}(\mathrm{End}_{\mathbb{C}(\!\{z\}\!)}(M))+2\sum_{i=1}^{m}(p_{i}+r_{i}-r_{i})n+n^{2}\\
		&\quad -2\sum_{i=1}^{m}(p_{i}+r_{i})+1\\
		&=-\mathrm{Irr}(\mathrm{End}_{\mathbb{C}(\!\{z\}\!)}(M))+2(n-1)\sum_{i=1}^{m}(p_{i}+r_{i})-n^{2}+1\\
		&=-n^{2}-\mathrm{Irr}(\mathrm{End}_{\mathbb{C}(\!\{z\}\!)}(M))+2(n-1)(C_{G},\zeta)+1.
	\end{align*}

	On the other hand, let us assume that  there exists $i\in \{1,2,\ldots,m\}$
	such that 
	$\mathrm{ord\,}\frac{\tilde{q}_{i}(z)}{z}\ge 0$. Then $\pr^{-}(\tilde{q}_{i}(z))=q_{i}(z)$ must be $0$ and 
	$\mathrm{ord\,}\frac{\tilde{q_{j}}(z)}{z}<0$ for the other $j\in \{1,2,\ldots,m\}\backslash\{i\}$ 
	because
	$q_{j}(z)\neq 0$ by the definition of HTL normal forms.
	We may put  $i=m$ by permuting the indices if
	necessary.

	Let us note that $m\ge 2$ in this case.
	If $m=1$, then $\frac{\tilde{q}_{m}(z)}{z}=G$.  Hence $G$ has no
	 pole at $z=0$ and
	$M$ has no singularity at $z=0$.

	In a way similar to the above argument, we can  show that
	\begin{align*}
		\mu(C_{G})_{\infty}&=\sum_{i=1}^{m-1}\mu(C_{\tilde{q_{i}}})+2\sum_{1\le j<k\le m-1}(C_{\tilde{q_{j}}},C_{\tilde{q_{k}}})
		-r_{C_{G}}+1\\
		&=-\sum_{1\le i,j\le m-1}\mathrm{Irr}(\mathrm{Hom}_{\mathbb{C}(\!\{z\}\!)}(M_{E_{q_{i}(z),R_{i}}},M_{E_{q_{j}(z),R_{j}}}))\\
		&\quad+2\sum_{i=1}^{m-1}p_{i}\sum_{i=1}^{m-1}r_{i}
		+\sum_{i=1}^{m-1}r_{i}\sum_{i=1}^{m-1}r_{i}-2\sum_{i=1}^{m-1}(p_{i}+r_{i})+1.
	\end{align*}
	Now let us notice that
	\begin{align*}
	\mathrm{Irr}(\mathrm{End}_{\mathbb{C}(\!\{z\}\!)}(M))&=\sum_{1\le i,j\le m-1}\mathrm{Irr}(\mathrm{Hom}_{\mathbb{C}(\!\{z\}\!)}
	(M_{E_{q_{i}(z),R_{i}}},M_{E_{q_{j}(z),R_{j}}}))\\
	&\quad+
	2\sum_{i=1}^{m-1}\mathrm{Irr}(\mathrm{Hom}_{\mathbb{C}(\!\{z\}\!)}(M_{E_{q_{m}(z),R_{m}}},
	M_{E_{q_{i}(z),R_{i}}}))\\
	&\quad +\mathrm{Irr}(\mathrm{End}_{\mathbb{C}(\!\{z\}\!)}(M_{E_{q_{m}(z),R_{m}}}))\\
	&=\sum_{1\le i,j\le m-1}\mathrm{Irr}(\mathrm{Hom}_{\mathbb{C}(\!\{z\}\!)}(M_{E_{q_{i}(z),R_{i}}},M_{E_{q_{j}(z),R_{j}}}))\\
	&\quad +2\sum_{i=1}^{m-1}p_{i},\\
	\sum_{i=1}^{m}r_{i}\sum_{i=1}^{m}r_{i}&=\sum_{i=1}^{m-1}r_{i}\sum_{i=1}^{m-1}r_{i}+2\sum_{i=1}^{m-1}r_{i}+1\\
	&=\sum_{i=1}^{m-1}r_{i}\sum_{i=1}^{m-1}r_{i}+2(n-1)+1,\\
	\sum_{i=1}^{m}(p_{i}+r_{i})&=\sum_{i=1}^{m-1}(p_{i}+r_{i})+1,\\
		(C_{G},\zeta)_{\infty}&=\sum_{i=1}^{m-1}(p_{i}+r_{i})=\sum_{i=1}^{m}(p_{i}+r_{i})-1,
	\end{align*}
	where we use the fact $p_{m}=0$ and $r_{m}=1$.
	Then it follows that
	\begin{align*}
		\mu(C_{G})_{\infty}
		&=-\mathrm{Irr}(\mathrm{End}_{\mathbb{C}(\!\{z\}\!)}(M)
		+2\sum_{i=1}^{m}p_{i}\sum_{i=1}^{m}r_{i}
		+\sum_{i=1}^{m}r_{i}\sum_{i=1}^{m}r_{i}\\
		&\quad -2\sum_{i=1}^{m}(p_{i}+r_{i})-2(n-1)+2\\
		&=-n^{2}-\mathrm{Irr}(\mathrm{End}_{\mathbb{C}(\!\{z\}\!)}(M))
		+2(n-1)(C_{G},\zeta)_{\infty}+2\\
		&=-n^{2}-\mathrm{Irr}(\mathrm{End}_{\mathbb{C}(\!\{z\}\!)}(M))
		+2(n-1)(C_{G},\zeta)_{\infty}+(m-r_{C_{G}})+1.
	\end{align*}
	\end{proof}
	\subsection{Differential modules with regular semisimple matrices over $\mathbb{C}[\![z]\!]$}
In order to decompose the characteristic polynomial in accordance with
the  HTL normal form of the corresponding differential module, we have assumed the multiplicity free condition in
Proposition \ref{Hensel}.
However if we consider a differential module with regular singularity, this module should be just rank 1 under this condition.
Thus we now discuss another condition which we call regular semisimplicity
 over $\mathbb{C}[\![z]\!]$ and see that the previous argument is also
 valid under this condition.

\begin{df}\label{rssdef}\normalfont
	Let us consider a differential module $M$ over $\mathbb{C}(\!\{z\}\!)$
	of rank $n$ with a matrix $G$.
If there exists $X\in \mathrm{GL}(n,\mathbb{C}[\![z]\!])$ such that
\[
XGX^{-1}+\left(\frac{d}{dz}X\right)X^{-1}=\mathrm{diag}(q_{1}(z),q_{2}(z),\ldots,q_{n}(z))z^{-1}
\]
with mutually different polynomials $q_{i}(z)\in \mathbb{C}[z^{-1}]$ of $z^{-1}$, $i=1,2,\ldots,n$,
then we say that $G$ is \emph{regular semisimple over} $\mathbb{C}[\![z]\!]$
with the HTL normal form $\mathrm{diag}(q_{1}(z),q_{2}(z),\ldots,q_{n}(z))z^{-1}$.
\end{df}
\begin{rem}\normalfont
	Let be $M$ a differential module over $\mathbb{C}(\!\{z\}\!)$
	with a matrix $G$. Then in papers \cite{JMU} and \cite{Boa}, it is assumed that the leading coefficient 
	of $G$ is diagonalizable with distinct eigenvalues if the pole order of $G$ at $z=0$ is greater than 1, or 
	diagonalizable with distinct eigenvalues mod $\mathbb{Z}$ if $G$ has a simple pole at $z=0$, see equation $(1.3)$
	in \cite{JMU} and DEFINITION 2.2 in \cite{Boa}.
	Then we can see that $G$ is regular semisimple under this condition.
\end{rem}
\begin{prop}\label{rss}
	Let $M$ be a differential module over $\mathbb{C}(\!\{z\}\!)$
	of rank $n$
	and fix a matrix $G\in M(n,\mathbb{C}(\!\{z\}\!))$ of $M$.

	Suppose that $G$ is regular semisimple over $\mathbb{C}[\![z]\!]$
	with the HTL normal form
	\[
		\mathrm{diag}(q_{1}(z),q_{2}(z),\ldots,q_{n}(z))z^{-1}.
	\]

	 Then the characteristic polynomial $z^{n\nu}\mathrm{det}\left(yI_{n}-G\right)\in \mathbb{C}\{z\}[y]$
	 decomposes as follows,
	 \[
		z^{n\nu}\mathrm{det}\left(yI_{n}-G\right)=z^{n\nu}\prod_{i=1}^{}
		\left(y-\frac{\tilde{q}_{i}(z)}{z}\right).
	 \]
	 Here $\tilde{q}_{i}(z)\in \mathbb{C}(\!\{z\}\!)$ satisfies that
	 \[
		\pr^{\le 0}(\tilde{q}_{i}(z))= q_{i}(z)
	 \]
	 for each $i=1,2,\ldots,n$,
	 and $\pr^{\le 0}\colon \mathbb{C}(\!\{z\}\!)\rightarrow \mathbb{C}[z^{-1}]$ is
	 the projection along the decomposition $\mathbb{C}(\!(z)\!)=\mathbb{C}[z^{-1}]\oplus z\mathbb{C}[\![z]\!].$
\end{prop}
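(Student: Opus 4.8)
The plan is to reduce this to the essentially diagonal situation and then mimic the argument from Proposition~\ref{Hensel}. The regular semisimple hypothesis already provides $X\in \mathrm{GL}(n,\mathbb{C}[\![z]\!])$ diagonalizing $G$ up to the connection term, so the first step is to record that the gauge transformation changes $G$ into $\mathrm{diag}(q_1(z),\ldots,q_n(z))z^{-1}$ exactly, not just modulo lower order terms: unlike the ramified case of Proposition~\ref{Hensel}, here the formal conjugation is convergent and literal because $X$ has entries in $\mathbb{C}[\![z]\!]$ and the $q_i$ are honest Laurent polynomials in $z^{-1}$. The key observation is that $\det(yI_n-G)$ is a gauge-invariant of the \emph{matrix} $G$ only up to conjugation by $X$, but conjugation by $X$ alone (ignoring the $\left(\frac{d}{dz}X\right)X^{-1}$ term) does not change the characteristic polynomial. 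So I would separate the two contributions carefully.

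More precisely, the second step is to handle the connection term. Write $H:=XGX^{-1}+\left(\frac{d}{dz}X\right)X^{-1}=\mathrm{diag}(q_1(z),\ldots,q_n(z))z^{-1}$, so that $XGX^{-1}=H-\left(\frac{d}{dz}X\right)X^{-1}$. Since $X\in \mathrm{GL}(n,\mathbb{C}[\![z]\!])$, the matrix $\left(\frac{d}{dz}X\right)X^{-1}$ has entries in $\mathbb{C}[\![z]\!]$, hence its order is $\ge 0$, so it is a holomorphic perturbation of the diagonal pole part $H$. Because $\det(yI_n-G)=\det(yI_n-XGX^{-1})$, I want to show the roots of the characteristic polynomial in $y$ are $\tilde q_i(z)/z$ for convergent series $\tilde q_i(z)$ whose principal part $\pr^{\le 0}(\tilde q_i(z))$ equals $q_i(z)$. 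The eigenvalues of $XGX^{-1}=H-\left(\frac{d}{dz}X\right)X^{-1}$ are perturbations of the diagonal entries $q_i(z)/z$ by an $O(z^0)$ term, and since the $q_i$ are distinct as polynomials in $z^{-1}$, the leading principal parts are distinct, so the eigenvalues separate into $n$ distinct convergent branches. Projecting onto the negative-degree part of $z\cdot(\text{eigenvalue})$ kills the $O(z)$ correction coming from $\left(\frac{d}{dz}X\right)X^{-1}$ and recovers $q_i(z)$ exactly; this is precisely the statement $\pr^{\le 0}(\tilde q_i(z))=q_i(z)$.

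The cleanest route to the factorization itself is again the splitting lemma, Lemma~\ref{splitting}, applied over $\mathbb{C}(\!(z)\!)$ rather than a ramified field: because the polar parts $q_i(z)z^{-1}$ are already diagonal and pairwise distinct, the hypothesis on disjoint leading eigenvalue-sets is met at each relevant order, so one splits off one-dimensional blocks inductively and obtains $XGX^{-1}\equiv \mathrm{diag}(q_1(z),\ldots,q_n(z))z^{-1}\pmod{z^{-1}\mathbb{C}[\![z]\!]}$ in fully diagonal form. This gives the decomposition
\[
\det(yI_n-G)=\prod_{i=1}^{n}\left(y-\frac{\tilde q_i(z)}{z}\right)
\]
with $\tilde q_i(z)\in \mathbb{C}(\!(z)\!)$ satisfying $\tilde q_i(z)\equiv q_i(z)\pmod{\mathbb{C}[\![z]\!]}$. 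Convergence of the $\tilde q_i$, i.e. $\tilde q_i(z)\in \mathbb{C}(\!\{z\}\!)$, follows because the left-hand side lies in $\mathbb{C}\{z\}[y]$ and, the $q_i$ being distinct, the factorization is forced to be a factorization over the convergent field (the roots are distinct convergent-Laurent branches, by an argument parallel to the closing paragraph of Proposition~\ref{Hensel} but using ordinary rather than Puiseux series). Multiplying through by $z^{n\nu}$ gives the claimed form.

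The main obstacle I anticipate is bookkeeping the two sources of contribution to the eigenvalues, namely the genuine polar part $H$ versus the holomorphic gauge term $\left(\frac{d}{dz}X\right)X^{-1}$, and confirming that the latter only perturbs the convergent tail and never the polar principal part; this is exactly what the projection $\pr^{\le 0}$ is designed to discard, but it must be checked that no cancellation in the determinant reshuffles polar parts between branches. The distinctness of the $q_i$ as elements of $\mathbb{C}[z^{-1}]$ is what guarantees this does not happen, so the proof hinges on translating that distinctness into the separation-of-branches needed for the splitting lemma.
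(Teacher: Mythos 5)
Your proposal is correct and follows essentially the same route as the paper: use the regular semisimplicity to write $XGX^{-1}\equiv \mathrm{diag}(q_{1}(z),\ldots,q_{n}(z))z^{-1}\pmod{\mathbb{C}[\![z]\!]}$ (since $\mathrm{ord}\,(\tfrac{d}{dz}X)X^{-1}\ge 0$), then split into one-dimensional blocks via Lemma \ref{splitting} exactly as in Proposition \ref{Hensel}, and deduce convergence of the $\tilde q_{i}$ from the fact that the characteristic polynomial has convergent coefficients. The extra bookkeeping you describe about the holomorphic gauge term only perturbing the $z\mathbb{C}[\![z]\!]$-tail is precisely the content of the paper's one-line reduction, so there is nothing missing and nothing genuinely different.
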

\begin{proof}
	By the assumption, there exists $X\in \mathrm{GL}(n,\mathbb{C}[\![z]\!])$
	such that
	\[
	XGX^{-1}+\left(\frac{d}{dz}X\right)X^{-1}=\mathrm{diag}(q_{1}(z),q_{2}(z),\ldots,q_{n}(z))z^{-1}.
	\]
	Since $\mathrm{ord\,}\left(\frac{d}{dz}X\right)X^{-1}\ge 0$,
	it follows that
	\[
	XGX^{-1}\equiv\mathrm{diag}(q_{1}(z),q_{2}(z),\ldots,q_{n}(z))z^{-1}\quad
	(\mathrm{mod\ }\mathbb{C}[\![z]\!]).
	\]
	The regular semisimplicity assures that all $q_{i}(z)$ are mutually
	different polynomials of $z^{-1}$. Thus the result follows from same argument in Proposition \ref{Hensel}.
\end{proof}

The argument in Propositions \ref{intersection}, \ref{milnor}
and Theorem \ref{premilnor} is valid without any change even for
this case.
Thus the Milnor formula as we saw in Theorem \ref{premilnor}
 holds for this regular semisimple case.

\begin{thm}
	Let us take a differential module $M$ over $\mathbb{C}(\!\{z\}\!)$
	of rank $n$
	and a matrix $G$ of $M$ as in Proposition \ref{rss}.
	Suppose that $M$ has a singularity at $z=0$.
	Then the Milnor number of $C_{G}$ at $([\zeta:\eta],z)=([0:1],0)$ is
	\[
		\mu(C_{G})_{\infty}=-n^{2}-\mathrm{Irr}(\mathrm{End}_{\mathbb{C}(\!\{z\}\!)}(M))+2(n-1)(C_{G},\zeta)_{\infty}+(n-r_{C_{G}})+1.
	\]
\end{thm}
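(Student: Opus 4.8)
The plan is to run the proof of Theorem \ref{premilnor} essentially verbatim, feeding in the decomposition of Proposition \ref{rss} in place of that of Proposition \ref{Hensel}. The single structural difference is that in the regular semisimple case every branch is unramified: the factors $\tilde{q}_i(z)$ furnished by Proposition \ref{rss} already lie in $\mathbb{C}(\!\{z\}\!)$ rather than in a proper Puiseux extension, so each elementary cell has ramification index $r_i = 1$ and the number $m$ of cells equals the rank $n$. I therefore expect the result to be exactly the formula of Theorem \ref{premilnor} with the occurrence of $m$ replaced by $n$.

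First I would record the local inputs at $r_i = 1$. For an index with $\mathrm{ord\,}(\tilde{q}_i(z)/z) < 0$ the branch $C_{\tilde{q}_i}$ is the graph $\zeta = z/\tilde{q}_i(z)$ and hence smooth; indeed the expression $(2p_i + r_i - 1)(r_i - 1) - \mathrm{Irr}(\mathrm{End}_{\mathbb{C}(\!\{z\}\!)}(M_{E_{q_i,R_i}}))$ of Proposition \ref{milnor} vanishes at $r_i = 1$, the first factor being killed by $r_i - 1 = 0$ and the endomorphism module of a rank-one module being trivial. The pairwise intersection numbers are given by Proposition \ref{intersection} at $r_i = r_j = 1$, namely $(C_{\tilde{q}_i}, C_{\tilde{q}_j}) = p_i + p_j + 1 - \mathrm{Irr}(\mathrm{Hom}_{\mathbb{C}(\!\{z\}\!)}(M_{E_{q_i,R_i}}, M_{E_{q_j,R_j}}))$. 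These are precisely the quantities fed into the additive Milnor count.

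Next I would reproduce the same case distinction. Since the $q_i(z) \in \mathbb{C}[z^{-1}]$ are mutually distinct, at most one of them vanishes, so at most one index satisfies $\mathrm{ord\,}(\tilde{q}_i(z)/z) \ge 0$; this is exactly the dichotomy in the proof of Theorem \ref{premilnor}. In either branch of the argument one forms $\sum_i \mu(C_{\tilde{q}_i}) + 2\sum_{j<k}(C_{\tilde{q}_j}, C_{\tilde{q}_k}) - r_{C_G} + 1$ and collects terms through $n = \sum_i r_i$ and $(C_G, \zeta)_\infty = \sum_i (p_i + r_i)$; this is the identical computation of Theorem \ref{premilnor}, now specialized to $m = n$, and it yields the asserted identity.

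The step that carried the real weight in the original proof --- the Galois/Hensel analysis of Proposition \ref{Galois}, which sorts the roots $\tilde{q}_{[i,j]}$ into Galois orbits to identify the irreducible components inside a ramified cell --- becomes trivial here, and this is the only point I would take care to confirm loses nothing. Each $\tilde{q}_i \in \mathbb{C}(\!\{z\}\!)$ is already Galois invariant, so every branch is individually defined over $\mathbb{C}(\!\{z\}\!)$ and is its own irreducible component, and the product of Proposition \ref{rss} is automatically the irreducible factorization. Thus there is no genuine obstacle: the regular semisimple case is the unramified specialization $r_1 = \cdots = r_m = 1$ of Theorem \ref{premilnor}, and the formula follows by the identical bookkeeping.
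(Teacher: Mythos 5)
Your proposal is correct and coincides with the paper's own treatment: the paper simply remarks that the arguments of Propositions \ref{intersection}, \ref{milnor} and Theorem \ref{premilnor} remain valid without change once Proposition \ref{rss} supplies the factorization, all ramification indices being $1$ and the number of cells being $m=n$, which turns the term $(m-r_{C_{G}})$ into $(n-r_{C_{G}})$. The points you single out to verify --- smoothness of each branch, at most one index with $q_{i}=0$, and the collapse of the Galois-orbit analysis --- are exactly the right ones, and none of them causes any difficulty.
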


\section{Global comparison: Euler characteristics}
In the previous section, we have obtained a comparison formula of local singularities
of a differential module and its characteristic polynomial.
This local comparison implies the following coincidence of the
global invariants,
namely we shall show the matching of the index of rigidity of a differential
equation  and the Euler characteristic of the corresponding spectral curve.

We now come back to the differential equation
\[
	dw=Aw,\quad A\in M(n,\Omega_{X}(*D)(X)),
\]
on the Riemann surface $X$.
We use the same notation as in Section \ref{spectral curve}.
Recall that $D=\{a_{1},a_{2},\ldots,a_{p}\}$ is the set of poles of $A$.
This equation defines an algebraic connection $\nabla_{A}:=d-A$ on the trivial algebraic vector bundle
$\mathcal{O}_{U,\text{alg}}^{\oplus n}$, where $U:=X\backslash D.$
Here $\mathcal{O}_{U,\text{alg}}$ is the sheaf of regular functions on the Zariski open subset $U\subset X$.

Let $\nabla_{A}^{*}$ be the dual connection of $\nabla_{A}$, i.e.,
the dual bundle $(\mathcal{O}_{U,\text{alg}}^{\oplus n})^{*}$
with the connection
\[
\nabla_{A}^{*}(\phi)(s)=-\phi(\nabla_{A}s)
\]
where  $\phi$ and $s$ are sections of $(\mathcal{O}_{U,\text{alg}}^{\oplus n})^{*}$ and
$\mathcal{O}_{U,\text{alg}}^{\oplus n}$ respectively.

Further define the endomorphism connection as the tensor product,
$$\mathcal{E}\mathrm{nd}(\nabla_{A}):=\nabla_{A}^{*}\otimes \nabla_{A}.$$

\begin{df}[index of rigidity, Katz \cite{Kat3}]\normalfont
	The \emph{index of rigidity of $\nabla_{A}$} is the Euler characteristic
	\[
	\mathrm{rig\,}(\nabla_{A})
	=\sum_{i=0}^{2}(-1)^{i}\mathrm{dim}_{\mathbb{C}}H^{i}_{\mathrm{dR}}(X,j_{!*}(\mathcal{E}\mathrm{nd}\nabla_{A})).
	\]
	Here $j_{!*}(\mathcal{E}\mathrm{nd}\nabla_{A})$ is the middle extension by the embedding $j\colon U\hookrightarrow X$ of $\mathcal{E}\mathrm{nd}\nabla_{A}$
	 and
	$H^{*}_{\mathrm{dR}}(X,j_{!*}(\mathcal{E}\mathrm{nd}\nabla_{A}))$ are hypercohomology groups of the algebraic de Rham complex
	of $j_{!*}(\mathcal{E}\mathrm{nd}\nabla_{A})$, see II.6 in \cite{Del2} and 2.9 in \cite{Katbook}
	for more detailed treatment.
\end{df}
The Euler-Poincare formula by Deligne, Gabber and Katz, see THEOREM 2.9.9 in
\cite{Katbook}, gives a decomposition of $\mathrm{rig}(\nabla_{A})$
into a sum of local invariants as follows,
	\begin{align}\label{eulerpoincare}
	\mathrm{rig}(\nabla_{A})=(2-2g)\mathrm{rank\,}(\mathcal{E}\mathrm{nd}(\nabla_{A}))
	-\sum_{a\in D}\delta(\mathrm{End}_{\mathbb{C}(\!\{z_{a}\}\!)}(M_{A_{a}})),
	\end{align}
	where
\begin{align*}
\delta(\mathrm{End}_{\mathbb{C}(\!\{z\}\!)}(M_{A_{a}}))&:=
	\mathrm{rank\,}\mathrm{End}_{\mathbb{C}(\!\{z_{a}\}\!)}(M_{A_{a}})+\mathrm{Irr}(\mathrm{End}_{\mathbb{C}(\!\{z_{a}\}\!)}(M_{A_{a}}))\\
	 &\quad -\mathrm{dim\,}_{\mathbb{C}}\mathrm{Hom}_{\mathbb{C}(\!(z_{a})\!)}(\mathrm{End}_{\mathbb{C}(\!(z_{a})\!)}(\widetilde{M_{A_{a}}}),
	\mathbb{C}(\!(z_{a})\!))^{\text{hor}},
\end{align*}
and
$M^\text{hor}:=\{m\in M\mid \nabla_{M}m=0\}$ for a differential module $M$.

Take $a\in D$ and choose a local coordinate $(U_{i},z_{i})$ containing $a\in U_{i}$.
Then we can write $A=A_{i}dz_{i}$, $A_{i}\in M(n,\mathbb{C}(z_{i}))$ on $U_{i}$.
Let us put $z_{a}:=z_{i}-z_{i}(a)$. Then power series expansion of $A_{i}$ defines
$A_{a}\in M(n,\mathbb{C}(\!\{z_{a}\}\!))$ and a differential module
$M_{A_{a}}:=\mathbb{C}(\!\{z_{a}\}\!)^{\oplus n}$ with the derivation
$\nabla_{M_{A_{a}}}:=\frac{d}{dz_{a}}-A_{a}$.
Let us denote the point $([\zeta_{i}:\eta_{i}],z_{i})=([0:1],a)$
by $\infty_{a}$.

The following assumption enables us to apply  the results in Section \ref{HTLMILNOR} to the connection $\nabla_{A}.$
\begin{ass}\label{mfrss}\normalfont
	For each $a\in D$, the HTL normal form of
	$M_{A_{a}}$ is multiplicity free or
	$A_{a}$ is regular semisimple over $\mathbb{C}[\![z_{a}]\!]$.
\end{ass}

\begin{thm}\label{MILNER}
	Under Assumption \ref{mfrss} we have the following.
	For each $a\in D$ the Milnor number of $C_{A}$ at $\infty_{a}$
	is
	\[
	\mu(C_{A})_{\infty_{a}}=-\delta(\mathrm{End}_{\mathbb{C}(\!\{z_{a}\}\!)}(M_{A_{a}}))
	-r_{C_{A_{a}}}
	+2(n-1)(C_{A},X_{\infty})_{\infty_{a}}+1.
	\]
	Here $(C,C')_{\infty_{a}}$ is the intersection number of divisors $C, C'$
	at $\infty_{a}$.
\end{thm}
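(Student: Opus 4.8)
The plan is to deduce the statement directly from the local Milnor formula of Theorem \ref{premilnor} (and its regular semisimple counterpart stated just after Proposition \ref{rss}), rewriting the right-hand side in terms of the $\delta$-invariant. First I would identify the germ of the spectral curve $C_A$ at $\infty_a$ with the local plane curve germ $C_G$ studied in Section \ref{HTLMILNOR} for the choice $G = A_a$. Indeed, in the coordinate $z_a$ near $a$ the curve $C_A$ is the zero locus of the homogenized characteristic polynomial $z^{n\nu}\zeta^n\det(\frac{\eta}{\zeta}I_n - A_a)$, and $X_\infty$ is cut out near $\infty_a$ by $\zeta = 0$. Consequently $(C_A, X_\infty)_{\infty_a} = (C_G, \zeta)_\infty$ and $r_{C_{A_a}} = r_{C_G}$, and the local formula yields
\[
\mu(C_A)_{\infty_a} = -n^2 - \mathrm{Irr}(\mathrm{End}_{\mathbb{C}(\!\{z_a\}\!)}(M_{A_a})) + 2(n-1)(C_A, X_\infty)_{\infty_a} + (c - r_{C_{A_a}}) + 1,
\]
where $c = m$ is the number of HTL cells under the multiplicity free hypothesis and $c = n$ under the regular semisimple hypothesis.

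Comparing with the asserted formula, it remains to establish the identity
\[
\delta(\mathrm{End}_{\mathbb{C}(\!\{z_a\}\!)}(M_{A_a})) = n^2 + \mathrm{Irr}(\mathrm{End}_{\mathbb{C}(\!\{z_a\}\!)}(M_{A_a})) - c.
\]
Since $\mathrm{End}_{\mathbb{C}(\!\{z_a\}\!)}(M_{A_a})$ has rank $n^2$, by the definition of $\delta$ this amounts to showing
\[
\dim_\mathbb{C}\mathrm{Hom}_{\mathbb{C}(\!(z_a)\!)}(\mathrm{End}_{\mathbb{C}(\!(z_a)\!)}(\widetilde{M_{A_a}}), \mathbb{C}(\!(z_a)\!))^{\text{hor}} = c.
\]
Writing $\widetilde{M} := \widetilde{M_{A_a}}$, I would evaluate the left-hand side via the tensor-hom adjunction in the tannakian category of differential modules over $\mathbb{C}(\!(z_a)\!)$: horizontal homomorphisms into the trivial module $\mathbb{C}(\!(z_a)\!)$ are precisely morphisms of differential modules, and
\[
\mathrm{Hom}_{\mathrm{diff}}(\widetilde{M}^* \otimes \widetilde{M}, \mathbb{C}(\!(z_a)\!)) \cong \mathrm{Hom}_{\mathrm{diff}}(\widetilde{M}, \widetilde{M}) = \mathrm{End}_{\mathrm{diff}}(\widetilde{M}).
\]
Thus the quantity in question equals $\dim_\mathbb{C}\mathrm{End}_{\mathrm{diff}}(\widetilde{M})$, the dimension of the space of flat endomorphisms of $\widetilde{M}$.

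Finally I would compute $\dim_\mathbb{C}\mathrm{End}_{\mathrm{diff}}(\widetilde{M})$ by Schur's lemma. By Theorem \ref{refinedHTL}, each of the two hypotheses provides a decomposition $\widetilde{M} \cong \bigoplus_{i=1}^c \widetilde{M_{E_{q_i(z), R_i}}}$ into irreducible differential modules over $\mathbb{C}(\!(z_a)\!)$ (irreducibility of the cells is exactly what was observed in Section \ref{HTLMILNOR}), and the summands are pairwise non-isomorphic since the $q_i$ lie in distinct Galois orbits, as required in the definition of an HTL normal form. As there are no nonzero morphisms between non-isomorphic irreducibles and each irreducible summand has $\mathrm{End}_{\mathrm{diff}} = \mathbb{C}$, we obtain $\mathrm{End}_{\mathrm{diff}}(\widetilde{M}) \cong \mathbb{C}^c$. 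Substituting $\dim_\mathbb{C}\mathrm{End}_{\mathrm{diff}}(\widetilde{M}) = c$ back gives the desired identity, and hence the theorem in both cases.

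The main obstacle is this middle step: the identification of $\mathrm{Hom}_{\mathbb{C}(\!(z_a)\!)}(\mathrm{End}(\widetilde{M}), \mathbb{C}(\!(z_a)\!))^{\text{hor}}$ with $\mathrm{End}_{\mathrm{diff}}(\widetilde{M})$ and the evaluation of its dimension. The geometric translation of the germ, the intersection number, and the number of branches is immediate from the construction of $\overline{T^{*}X}$ and $X_\infty$, and the local Milnor numbers are already available. Care is needed to note that the horizontal sections are taken over the formal field $\mathbb{C}(\!(z_a)\!)$, which is precisely where the irreducible HTL decomposition of the formalization lives, so that Schur's lemma is genuinely applicable.
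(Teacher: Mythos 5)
Your proposal is correct and follows essentially the same route as the paper's proof: identify the germ at $\infty_a$ with $C_{A_a}$, invoke the local Milnor formula of Theorem \ref{premilnor}, and evaluate $\mathrm{Hom}_{\mathbb{C}(\!(z_{a})\!)}(\mathrm{End}(\widetilde{M_{A_{a}}}),\mathbb{C}(\!(z_{a})\!))^{\text{hor}}\cong\mathbb{C}^{\oplus m_{a}}$ via the irreducible HTL decomposition of Theorem \ref{refinedHTL} and Schur's lemma. The only cosmetic difference is that you make the tensor-hom adjunction and the regular semisimple case explicit, which the paper leaves implicit.
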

\begin{proof}
We need to  compute $\mathrm{dim\,}_{\mathbb{C}}\mathrm{Hom}_{\mathbb{C}(\!(z_{a})\!)}(\mathrm{End}_{\mathbb{C}(\!(z_{a})\!)}(\widetilde{M_{A_{a}}}),
\mathbb{C}(\!(z_{a})\!))^{\text{hor}}.$
Let
\[
	\mathrm{diag}(E_{q^{a}_{1}(z_{a}),R^{a}_{1}},\ldots,E_{q^{a}_{m_{a}}(z_{a}),R^{a}_{m_{a}}})
 \]
 be the multiplicity free HTL normal form of $M_{A_{a}}$. Then we have the irreducible
 decomposition
 \[
	\widetilde{M_{A_{a}}}\cong \bigoplus_{i=1}^{m_{a}}\widetilde{M_{E_{q^{a}_{i}(z_{a}),R^{a}_{i}}}}.
\]
This decomposition shows that
\begin{align*}
	\mathrm{Hom}_{\mathbb{C}(\!(z_{a})\!)}(\mathrm{End}_{\mathbb{C}(\!(z_{a})\!)}&(\widetilde{M_{A_{a}}}),
		\mathbb{C}(\!(z_{a})\!))^{\text{hor}}\\
		&\cong \mathrm{Hom}_{\mathbb{C}(\!(z_{a})\!)}(\widetilde{M_{A_{a}}},\widetilde{M_{A_{a}}})^{\text{hor}}\\
		&\cong
		\mathrm{Hom}_{\mathbb{C}(\!(z_{a})\!)}(\bigoplus_{i=1}^{m_{a}}\widetilde{M_{E_{q^{a}_{i}(z_{a}),R^{a}_{i}}}},\bigoplus_{i=1}^{m_{a}}\widetilde{M_{E_{q^{a}_{i}(z_{a}),R^{a}_{i}}}})^{\text{hor}}\\
		&\cong
		\bigoplus_{i=1}^{m_{a}}\bigoplus_{j=1}^{m_{a}}\mathrm{Hom}_{\mathbb{C}(\!(z_{a})\!)}(\widetilde{M_{E_{q^{a}_{i}(z_{a}),R^{a}_{i}}}},\widetilde{M_{E_{q^{a}_{j}(z_{a}),R^{a}_{j}}}})^{\text{hor}}\\
		&\cong
		\bigoplus_{i=1}^{m_{a}}\mathrm{Hom}_{\mathbb{C}(\!(z_{a})\!)}(\widetilde{M_{E_{q^{a}_{i}(z_{a}),R^{a}_{i}}}},\widetilde{M_{E_{q^{a}_{i}(z_{a}),R^{a}_{i}}}})^{\text{hor}}\\
		&\cong
		\mathbb{C}^{\oplus m_{a}}
\end{align*}
by Schur's lemma since $\widetilde{M_{E_{q^{a}_{i}(z_{a}),R^{a}_{i}}}}$ are irreducible for
$i=1,2,\ldots,m_{a}$.

Then	the desired equation directly comes from Theorem \ref{premilnor}.
\end{proof}

\begin{rem}\label{Delta}\normalfont
Let us introduce the \emph{$\delta$-invariant of a singularity} which is
defined by using the Milnor number as follows,
\[
\delta(C_{A_{a}}):=\frac{1}{2}\left(\mu(C_{A_{a}})+r_{C_{A_{a}}}-1\right).
\]
See \cite{Mil} for a geometric meaning of this invariant. Here we note
that the germ $C_{A_{a}}$ is reduced by the multiplicity free condition.
Then the above formula can be rewritten in a natural form,
\[
2\delta(C_{A_{a}})-2(n-1)(C_{A},X_{\infty})_{\infty_{a}}=-\delta(\mathrm{End}_{\mathbb{C}(\!\{z_{a}\}\!)}(M_{A_{a}})).
\]
\end{rem}

 \begin{thm}\label{maintheorem}
	We use the same notation as above. Suppose that $\nabla_{A}$
	satisfies Assumption \ref{mfrss} and  $C_{A}$ is 
	irreducible. Moreover suppose that $C_{A}$ is smooth on $T^{*}X$.
	 Then the Euler characteristic $\chi(\widetilde{C_{A}})$ of the normalization
	$\widetilde{C_{A}}$ of $C_{A}$ coincides with the index of
	rigidity of $\nabla_{A}$, i.e.,
	\[
		\chi(\widetilde{C_{A}})=\mathrm{rig}(\nabla_{A}).
	\]
 \end{thm}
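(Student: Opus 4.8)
The plan is to reduce the claimed equality to a purely numerical comparison, matching the decomposition into local contributions coming from the genus formula for $C_A$ against the decomposition coming from the Euler--Poincar\'e formula \eqref{eulerpoincare} for $\mathrm{rig}(\nabla_A)$. First I would record the geometric consequences of the hypotheses. Since $C_A$ is irreducible (hence reduced) and smooth on $T^{*}X$, all of its singular points lie on the line at infinity $X_{\infty}$; as the spectral curve meets $X_{\infty}$ exactly at the points $\infty_{a}=([0:1],a)$ for $a\in D$, we get $\mathrm{Sing}(C_A)\subseteq\{\infty_{a}\mid a\in D\}$ and a clean splitting of the total intersection number, $(C_A,X_{\infty})=\sum_{a\in D}(C_A,X_{\infty})_{\infty_{a}}$. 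Irreducibility also makes the normalization $\widetilde{C_A}$ a connected smooth projective curve, so that $\chi(\widetilde{C_A})=2-2g(\widetilde{C_A})$.

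Next I would invoke the standard relation between the arithmetic and geometric genus of a reduced irreducible curve through the $\delta$-invariants of its singularities,
\[
g_{a}(C_A)=g(\widetilde{C_A})+\sum_{a\in D}\delta(C_{A_{a}}),
\]
where $\delta(C_{A_{a}})$ is the local $\delta$-invariant at $\infty_{a}$. Rewriting each term by Remark \ref{Delta},
\[
2\delta(C_{A_{a}})=2(n-1)(C_A,X_{\infty})_{\infty_{a}}-\delta(\mathrm{End}_{\mathbb{C}(\!\{z_{a}\}\!)}(M_{A_{a}})),
\]
and summing over $a\in D$ while using the splitting of $(C_A,X_{\infty})$ above, I obtain
\[
\sum_{a\in D}\delta(C_{A_{a}})=(n-1)(C_A,X_{\infty})-\tfrac12\sum_{a\in D}\delta(\mathrm{End}_{\mathbb{C}(\!\{z_{a}\}\!)}(M_{A_{a}})).
\]

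Substituting this together with the arithmetic genus formula \eqref{arithmetic} into $g(\widetilde{C_A})=g_{a}(C_A)-\sum_{a\in D}\delta(C_{A_{a}})$, the two occurrences of $(n-1)(C_A,X_{\infty})$ cancel, leaving
\[
g(\widetilde{C_A})=\tfrac12 n^{2}(2g-2)+1+\tfrac12\sum_{a\in D}\delta(\mathrm{End}_{\mathbb{C}(\!\{z_{a}\}\!)}(M_{A_{a}})).
\]
Hence $\chi(\widetilde{C_A})=2-2g(\widetilde{C_A})=n^{2}(2-2g)-\sum_{a\in D}\delta(\mathrm{End}_{\mathbb{C}(\!\{z_{a}\}\!)}(M_{A_{a}}))$. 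Since $\mathrm{rank}\,\mathcal{E}\mathrm{nd}(\nabla_A)=n^{2}$, this is precisely the right-hand side of \eqref{eulerpoincare}, so $\chi(\widetilde{C_A})=\mathrm{rig}(\nabla_A)$, as claimed.

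The substantive content has already been supplied by Theorem \ref{MILNER} and Remark \ref{Delta}, so the remaining argument is essentially formal bookkeeping. The step I expect to demand the most care is the geometric input at the outset: confirming that ``$C_A$ smooth on $T^{*}X$'' genuinely confines every singularity to the points $\infty_{a}$, and that $C_A$ meets $X_{\infty}$ only there, so that both the genus-drop sum and the intersection number $(C_A,X_{\infty})$ split over $D$ in the same way. One should also check that the multiplicity-free or regular-semisimple hypotheses of Assumption \ref{mfrss} force each germ $C_{A_{a}}$ to be reduced, which is exactly what makes the $\delta$-invariant identity of Remark \ref{Delta} applicable at every point of $\mathrm{Sing}(C_A)$.
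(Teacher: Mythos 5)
Your argument is correct and follows essentially the same route as the paper: both proofs combine the genus formula $\chi(\widetilde{C_A})=2-2g_{a}(C_A)+2\sum_{a\in D}\delta(C_{A_{a}})$ (you derive it via $g_{a}=g(\widetilde{C_A})+\sum\delta$, the paper cites Serre), the arithmetic genus computation \eqref{arithmetic}, the identity of Remark \ref{Delta}, and the Euler--Poincar\'e formula \eqref{eulerpoincare}, with the $(n-1)(C_A,X_\infty)$ terms cancelling identically. No gaps.
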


 \begin{proof}
	Since $C_{A}$ is smooth on $T^{*}X$, possible singularities are
	only on $C_{A}\cap X_{\infty}=\{\infty_{a}\mid a\in D\}$.
	Hence the Euler characteristic $\chi(\widetilde{C_{A}})$ can be computed
	by the formula
	\begin{align*}
		\chi(\widetilde{C_{A}})=(2-2g_{a}(C_{A}))+2\sum_{a\in D}\delta(C_{A_{a}}),
	\end{align*}
	see Proposition 3 in section IV of  \cite{Se} for example.
We have already computed the arithmetic genus $g_{a}(C_{A})$ in the equation $(\ref{arithmetic})$. Thus
	\[
		\chi(\widetilde{C_{A}})=n^{2}(2-2g)+(2-2n)(C_{A},X_{\infty})+2\sum_{a\in D}
	\delta(C_{A_{a}}).
	\]
	Finally the formula in Remark \ref{Delta} shows that
	\begin{equation*}
	\begin{split}
	\chi(\widetilde{C_{A}})&=n^{2}(2-2g)+(2-2n)(C_{A},X_{\infty})\\
	&\quad\quad+\sum_{a\in D}\left(
	-\delta(\mathrm{End}_{\mathbb{C}(\!\{z_{a}\}\!)}(M_{A_{a}}))
	+2(n-1)(C_{A},X_{\infty})_{\infty_{a}}
	\right)\\
	&=(2-2g)\mathrm{rank}(\mathcal{E}\mathrm{nd}(\nabla_{A}))-\sum_{a\in D}
	\delta(\mathrm{End}_{\mathbb{C}(\!\{z_{a}\}\!)}(M_{A_{a}}))
	=\mathrm{rig}(\nabla_{A}).
	\end{split}
\end{equation*}
	\end{proof}
\begin{cor}\label{cohomology}
	Let $\nabla_{A}$ be as in Theorem \ref{maintheorem} and
	moreover assume that $\nabla_{A}$ is irreducible.
	Then we have the following numerical coincidences of the cohomology groups,
	\[
		h^{i}_{\mathrm{dR}}(X,j_{!*}(\mathcal{E}\mathrm{nd}\nabla_{A}))=h^{i}(\widetilde{C_{A}},\mathbb{C}),\quad
		i=0,1,2.
	\]
	Here $h^{i}(*):=\mathrm{dim}_{\mathbb{C}}H^{i}(*).$
\end{cor}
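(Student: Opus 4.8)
The plan is to promote the equality of Euler characteristics furnished by Theorem \ref{maintheorem} to a term-by-term equality of Betti numbers. Writing
\[
\chi(\widetilde{C_A}) = h^0(\widetilde{C_A},\mathbb{C}) - h^1(\widetilde{C_A},\mathbb{C}) + h^2(\widetilde{C_A},\mathbb{C})
\]
and
\[
\mathrm{rig}(\nabla_A) = h^0_{\mathrm{dR}}(X, j_{!*}\mathcal{E}\mathrm{nd}\nabla_A) - h^1_{\mathrm{dR}}(X, j_{!*}\mathcal{E}\mathrm{nd}\nabla_A) + h^2_{\mathrm{dR}}(X, j_{!*}\mathcal{E}\mathrm{nd}\nabla_A),
\]
it suffices to establish the two boundary identities $h^0(\widetilde{C_A},\mathbb{C}) = h^0_{\mathrm{dR}}(X, j_{!*}\mathcal{E}\mathrm{nd}\nabla_A) = 1$ and $h^2(\widetilde{C_A},\mathbb{C}) = h^2_{\mathrm{dR}}(X, j_{!*}\mathcal{E}\mathrm{nd}\nabla_A) = 1$. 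The already-proven identity $\chi(\widetilde{C_A}) = \mathrm{rig}(\nabla_A)$ of Theorem \ref{maintheorem} then forces $h^1(\widetilde{C_A},\mathbb{C}) = h^1_{\mathrm{dR}}(X, j_{!*}\mathcal{E}\mathrm{nd}\nabla_A)$, settling the remaining case $i=1$.

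For the topological side I would use the hypothesis that $C_A$ is irreducible. Its normalization $\widetilde{C_A}$ is then a connected smooth projective curve, that is, a compact connected Riemann surface. Consequently $h^0(\widetilde{C_A},\mathbb{C}) = h^2(\widetilde{C_A},\mathbb{C}) = 1$ and $h^1(\widetilde{C_A},\mathbb{C}) = 2g(\widetilde{C_A})$, where $g(\widetilde{C_A})$ is the genus of the normalization. This is exactly where the irreducibility of the \emph{curve} is consumed.

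For the de Rham side I would invoke the irreducibility of the \emph{connection}. First, $H^0_{\mathrm{dR}}(X, j_{!*}\mathcal{E}\mathrm{nd}\nabla_A)$ is identified with the space of global horizontal sections of the endomorphism connection, i.e. the endomorphisms of $\nabla_A$ commuting with the connection. Since $\nabla_A$ is assumed irreducible, Schur's lemma gives $\mathrm{End}(\nabla_A) = \mathbb{C}\cdot\mathrm{id}$, whence $h^0_{\mathrm{dR}}(X, j_{!*}\mathcal{E}\mathrm{nd}\nabla_A) = 1$. For $h^2$ I would apply Poincar\'e duality for the intermediate extension on the curve $X$, which pairs $H^i_{\mathrm{dR}}(X, j_{!*}\mathcal{M})$ perfectly with $H^{2-i}_{\mathrm{dR}}(X, j_{!*}\mathcal{M}^{\vee})$ for the dual connection $\mathcal{M}^{\vee}$. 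The endomorphism connection is self-dual via the trace form $(\phi,\psi)\mapsto \mathrm{tr}(\phi\psi)$, so $(\mathcal{E}\mathrm{nd}\nabla_A)^{\vee} \cong \mathcal{E}\mathrm{nd}\nabla_A$, and therefore
\[
H^2_{\mathrm{dR}}(X, j_{!*}\mathcal{E}\mathrm{nd}\nabla_A) \cong H^0_{\mathrm{dR}}(X, j_{!*}\mathcal{E}\mathrm{nd}\nabla_A)^{*} \cong \mathbb{C},
\]
giving $h^2_{\mathrm{dR}}(X, j_{!*}\mathcal{E}\mathrm{nd}\nabla_A) = 1$.

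Combining the two sides with the Euler characteristic identity of Theorem \ref{maintheorem} then yields all three equalities $h^i_{\mathrm{dR}}(X, j_{!*}\mathcal{E}\mathrm{nd}\nabla_A) = h^i(\widetilde{C_A},\mathbb{C})$ for $i=0,1,2$. The main obstacle is the de Rham computation of $h^0$ and $h^2$: one must correctly identify $H^0$ of the middle extension with the global flat endomorphisms of $\nabla_A$ (so that Schur's lemma applies and, in particular, the identity section accounts for the full one-dimensional space), and one must carefully justify Poincar\'e self-duality for the intermediate extension of a possibly irregular connection. Once these two structural facts are in place, the remaining argument is purely formal and reduces to the alternating-sum bookkeeping described above.
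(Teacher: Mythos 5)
Your argument is essentially identical to the paper's proof: the paper also deduces $h^{0}_{\mathrm{dR}}=h^{2}_{\mathrm{dR}}=1$ from irreducibility of $\nabla_{A}$ together with duality, observes that these match $h^{0}(\widetilde{C_{A}},\mathbb{C})=h^{2}(\widetilde{C_{A}},\mathbb{C})=1$ for the connected normalization, and then extracts the $i=1$ case from the Euler characteristic identity of Theorem \ref{maintheorem}. Your write-up merely makes explicit the Schur's lemma and self-duality steps that the paper leaves implicit, so the two proofs coincide.
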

\begin{proof}
	By the irreducibility and duality, we have
	\[
		h^{0}_{\mathrm{dR}}(X,j_{!*}(\mathcal{E}\mathrm{nd}\nabla_{A}))=h^{2}_{\mathrm{dR}}(X,j_{!*}(\mathcal{E}\mathrm{nd}\nabla_{A}))=1.
	\]
	which shows that
	\[
		h^{i}_{\mathrm{dR}}(X,j_{!*}(\mathcal{E}\mathrm{nd}\nabla_{A}))=1=h^{i}(\widetilde{C_{A}},\mathbb{C})
	\]
	for $i=0,2.$
	Thus Theorem \ref{maintheorem} implies that
	\[
		h^{1}_{\mathrm{dR}}(X,j_{!*}(\mathcal{E}\mathrm{nd}\nabla_{A}))=2-\mathrm{rig\,}\nabla_{A}=2-
		\chi(\widetilde{C_{A}})=h^{1}(\widetilde{C_{A}},\mathbb{C}).
	\]
\end{proof}
\begin{rem}\normalfont
	A similar comparison of Euler characteristics of differential equations
 and another geometric counterparts, namely $\ell$-adic sheaves, has been known by Katz in \cite{Katbook}, \cite{Kat2}.
Also he pointed out a similarity between local properties, namely, the singularities of
differential equations and the ramifications of local Galois actions
on $\ell$-adic sheaves. One can find a table of analogies by Katz in  \cite{Kat}.
In this table of analogies, the irregularity of a differential equation corresponds to  the Swan conductor of the local Galois action on an $\ell$-adic
sheaf. On the arithmetic geometry side, the comparison formula
of the Swan conductor and the Milnor number has been studied,
which is called Deligne's Milnor formula, see \cite{Del}.
Our formula might give an analogy of this Milnor formula if we follow
 Katz' table.
\end{rem}

\end{document}